\newcommand{\rd}{\,\mathrm{d}}
\numberwithin{equation}{section}
\newtheorem{theorem}{Theorem}[section]
\newtheorem{lemma}[theorem]{Lemma}
\newtheorem{remark}[theorem]{Remark}
\def\bk{{\bf k}}
\def\bx{{\bf x}}
\def\by{{\bf y}}
\def\cM{\mathcal{M}}
\def\cC{\mathcal{C}}
\begin{document}

\title{Wasserstein-infinity stability and mean field limit of discrete interaction energy minimizers}

\author{ Ruiwen Shu\footnote{Department of Mathematics, University of Georgia, Athens, GA 30602 (ruiwen.shu@uga.edu).}}

\maketitle

\abstract{In this paper we give a quantitative stability result for the discrete interaction energy on the multi-dimensional torus, for the periodic Riesz potential. It states that if the number of particles $N$ is large and the discrete interaction energy is low, then the particle distribution is necessarily close to the uniform distribution (i.e., the continuous energy minimizer) in the Wasserstein-infinity distance. As a consequence, we obtain a quantitative mean field limit of interaction energy minimizers in the Wasserstein-infinity distance. The proof is based on the application of the author's previous joint work with J. Wang on the stability of continuous energy minimizer, together with a new mollification trick for the empirical measure in the case of singular interaction potentials.}

\section{Introduction}

In this paper we are concerned with the discrete interaction energy
\begin{equation}\label{EN}
    E_N(\vec{\bx}) = \frac{1}{2N(N-1)}\sum_{i,j=1,\, i\ne j}^NW(\bx_i-\bx_j)\,,
\end{equation}
where $N$ is the total number of particles, $\bx_1,\dots,\bx_N\in \mathbb{T}^d=(\mathbb{R}/\mathbb{Z})^d$, and $\vec{\bx}=(\bx_1,\dots,\bx_N)\in (\mathbb{T}^d)^N$ is the vector of the particle configuration. $W:\mathbb{T}^d\rightarrow (-\infty,\infty]$ is the interaction potential, 
assumed to be locally integrable, lower-semicontinuous and bounded from below. 

As $N$ becomes large, by taking the empirical measure
\begin{equation}
\rho_N[\vec{\bx}] = \frac{1}{N}\sum_{i=1}^N \delta(\cdot-\bx_i)\,,
\end{equation}
(where $\delta$ stands for the Dirac delta function on $\mathbb{T}^d$), one intuitively has the mean field limit: the discrete energy $E_N$ should approach the continuous energy
\begin{equation}
E[\rho] = \frac{1}{2}\int_{\mathbb{T}^d}\int_{\mathbb{T}^d}W(\bx-\by)\rho(\by)\rd{\by}\rho(\bx)\rd{\bx}\,,
\end{equation}
where $\rho\in \cM(\mathbb{T}^d)$, a probability measure on $\mathbb{T}^d$. Using the compactness of the underlying space $\mathbb{T}^d$ and the previously stated assumptions on $W$, it is straightforward to prove the existence of minimizers for $E_N$ and $E$. 

The following classical lemma gives the basic relations between $E_N$ and $E$. Its proof can be found in \cite[Section 4.2]{saffbook2} (with a slightly different underlying space).
\begin{lemma}\label{lem_min}
Assume $W$ is locally integrable, lower-semicontinuous and bounded from below. If $N_1<N_2$ then $\min E_{N_1}\le \min E_{N_2}$. Also, for any $N$, $\min E_N \le \min E$, and $\lim_{N\rightarrow\infty}\min E_N = \min E$.
\end{lemma}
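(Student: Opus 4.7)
The plan is to handle the three claims in order, reducing each to either a counting argument, an averaging over random samples, or a truncation-plus-weak-compactness argument.

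\textbf{Part 1 (monotonicity in $N$).} I would start from any configuration $\vec{\bx}=(\bx_1,\dots,\bx_{N_2})\in (\mathbb{T}^d)^{N_2}$ and average $E_{N_1}$ over the $\binom{N_2}{N_1}$ sub-configurations of size $N_1$. A direct count shows that each unordered pair $\{i,j\}\subset\{1,\dots,N_2\}$ appears in exactly $\binom{N_2-2}{N_1-2}$ such subsets, so the average over subsets of $\sum_{k\ne l\in S}W(\bx_k-\bx_l)$ equals $\frac{N_1(N_1-1)}{N_2(N_2-1)}\sum_{i\ne j}W(\bx_i-\bx_j)$. After dividing by the proper normalization, the average of $E_{N_1}$ over all $N_1$-subsets is exactly $E_{N_2}(\vec{\bx})$. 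Hence some subset attains $E_{N_1}\le E_{N_2}(\vec{\bx})$. Taking $\vec{\bx}$ to be a minimizer of $E_{N_2}$ yields $\min E_{N_1}\le \min E_{N_2}$.

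\textbf{Part 2 ($\min E_N\le \min E$).} I would use the probabilistic argument of sampling $N$ i.i.d.\ points $\bx_1,\dots,\bx_N$ from a minimizer $\rho$ of $E$. Since the samples are i.i.d., $\mathbb{E}[W(\bx_i-\bx_j)]=\int\int W(\bx-\by)\rho(\bx)\rho(\by)\rd\bx\rd\by=2E[\rho]$ for every $i\ne j$, so $\mathbb{E}[E_N(\vec{\bx})]=E[\rho]=\min E$. Therefore at least one realization satisfies $E_N(\vec{\bx})\le \min E$, giving $\min E_N\le \min E$. A small technical point (needed because $W$ may be $+\infty$ on a set of measure zero) is that $W\in L^1(\rho\otimes\rho)$ at a minimizer, which follows from $E[\rho]<\infty$; this requires one to first exclude the trivial case $\min E=\infty$.

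\textbf{Part 3 (convergence of the minimum).} By Parts 1 and 2, $\{\min E_N\}$ is monotone and bounded above by $\min E$, so the limit exists and is $\le \min E$. For the matching lower bound I would take minimizers $\vec{\bx}^{(N)}$ of $E_N$, let $\rho_N=\rho_N[\vec{\bx}^{(N)}]$, and extract a weakly convergent subsequence $\rho_{N_k}\rightharpoonup \rho\in\cM(\mathbb{T}^d)$ using weak compactness on the compact space $\mathbb{T}^d$. The main obstacle is that $W$ may be singular at $0$, so one cannot directly identify $E_N(\vec{\bx}^{(N)})$ with $E[\rho_N]$. I would bypass this by truncating: set $W_M=\min(W,M)$, which is bounded and lower-semicontinuous, and compare
\begin{equation*}
E_N(\vec{\bx}^{(N)})\ge E_N^M(\vec{\bx}^{(N)})=\frac{N}{N-1}E^M[\rho_N]-\frac{W_M(0)}{2(N-1)}.
\end{equation*}
Since $W_M$ is bounded lsc and $\rho_N\otimes\rho_N\rightharpoonup\rho\otimes\rho$, weak lower semicontinuity gives $\liminf_{N\to\infty}E^M[\rho_N]\ge E^M[\rho]$, while the error term vanishes because $W_M(0)\le M$. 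Thus $\liminf_{N\to\infty}\min E_N\ge E^M[\rho]$. Finally I would let $M\to\infty$ and invoke monotone convergence (after shifting $W$ by a constant to make it nonnegative, using boundedness from below) to obtain $E^M[\rho]\to E[\rho]\ge \min E$, closing the argument.
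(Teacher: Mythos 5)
Your proposal is correct and follows essentially the same approach as the paper: Part 1 is the sub-configuration averaging argument (you do it directly for arbitrary $N_1<N_2$ rather than stepping $N\to N+1$ and iterating, which is a harmless variant), Part 2 integrates $E_N$ against the product measure $\rho^{\otimes N}$ exactly as the paper does, and Part 3 truncates $W$ at level $M$, uses weak lower semicontinuity of $E^M$ along a weakly convergent subsequence of empirical measures, and sends $M\to\infty$ by monotone convergence. The small technical remarks you add (excluding $\min E=\infty$ in Part 2, and shifting $W$ to be nonnegative before invoking monotone convergence) are correct and, if anything, slightly more careful than the paper's own write-up.
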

For the sake of completeness, we include its proof in Appendix \ref{app_min}.

In the past decade, people have studied the discrete and continuous interaction energies intensively due to its importance in physics and biological and social sciences. The interaction potential may come from physics, for example, the Coulomb interaction or the Lennard-Jones potential, or from the study of collective behavior of social agents, where the potential is often repulsive at short distance and attractive at long distance. The energy minimizers can exhibit interesting pattern formation, including ring patterns, clustering and fractals. Many existing results are concerned with the existence, uniqueness and qualitative behavior of minimizers \cite{Ca13,Ca13_2,CCP15,SST15,Lop19,ShuCar,BCT,BKSUB,CDM,CFP17,KSUB,ST,Fra,DLM1,DLM2,FM}.

This paper will study the following questions:
\begin{itemize}
\item Stability of discrete energy minimizers: suppose we know that $\vec{\bx}$ satisfies that $E_N(\vec{\bx})$ is sufficiently close to its minimum, can we obtain that $\vec{\bx}$ is close to a minimizer in certain metric?
\item Mean field limit of discrete energy minimizers: for each $N$, let $\vec{\bx}_{(N)}$ be a minimizer of $E_N$. Then is it true that $\{\rho_N[\vec{\bx}_{(N)}]\}$ converges weakly to a minimizer of $E$ as $N\rightarrow\infty$? If yes, can one quantify the rate of convergence in certain metric?
\end{itemize}



In this paper we restrict our attention to the periodic Riesz potential with parameter $s<d$, defined by its Fourier coefficients\footnote{Here the Fourier coefficient of a function $f\in L^1(\mathbb{T}^d)$ is defined by $\hat{f}(\bk)=\int_{\mathbb{T}^d} f(\bx)e^{-2\pi i \bk\cdot\bx}\rd{\bx}$ for $\bk\in\mathbb{Z}^d$.}
\begin{equation}
\hat{W}_s(\bk) = \left\{\begin{array}{ll} |\bk|^{-d+s},& \quad \textnormal{if }\bk\ne 0 \\  0,&\quad \textnormal{if }\bk=0\end{array}\right.\,.
\end{equation}
Some basic properties of $W_s$ are outlined in Lemma \ref{lem_period} below. Compared with the possibly complicated behavior of the minimizers of $E_N$, it is easy to see that the minimizer of $E$ is unique, being the uniform distribution $\rho=1$ with the minimal energy being $E[1]=0$. In fact, this follows from 
\begin{equation}
E[\rho] = \frac{1}{2}\sum_{\bk} \hat{W}_s(\bk)|\hat{\rho}(\bk)|^2 = \frac{1}{2}\sum_{\bk\ne 0} |\bk|^{-d+s}|\hat{\rho}(\bk)|^2\,,
\end{equation}
(see \cite[Appendix 2]{SW21} for a rigorous justification of this equation). Furthermore, \cite{SW21GET} gives a stability result of this energy minimizer, as a multi-dimensional generalization of the classical Erd\H{o}s-Tur\'an inequality for polynomial root distribution \cite{ET,SW21}:
\begin{theorem}[{\cite[Theorem 1.3]{SW21GET}}]\label{thm_GET}
	Let $W$ be the periodic Riesz potential with parameter $s$. Then any $\rho\in\cM(\mathbb{T}^d)$ satisfies:
	\begin{enumerate}
		\item[\textnormal{(i)}]  If $d\ge 2$, then (denoting $d_\infty$ as the Wasserstein-infinity distance on $\mathbb{T}^d$)
		\begin{equation}\label{thm_GET_1}
			d_\infty(\rho,1) \lesssim  E[\rho]^\gamma,\quad \gamma = \frac{1}{2d-s}.
		\end{equation}
		\item[\textnormal{(ii)}] If $d=1$, then
		\begin{enumerate}
			\item If $s < 0$, then \eqref{thm_GET_1} also holds.
			\item If $s = 0$, then
			\begin{equation}\label{thm_GET_2}
				d_\infty(\rho,1)(1+|\log d_\infty(\rho,1)|)^{-1/2} \lesssim  E[\rho]^{1/2}.
			\end{equation}
			\item If $ 0<s<1$, then
			\begin{equation}
				d_\infty(\rho,1)  \lesssim  E[\rho]^{1/2}.
			\end{equation}
		\end{enumerate}
	    	\item[\textnormal{(iii)}]   \textnormal{(i)} and \textnormal{(ii)} are sharp up to the constants with the exception $d = 1$ and $s = 0$.
	\end{enumerate}

\end{theorem}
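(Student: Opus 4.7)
The plan is a Fourier-duality argument: turn the energy lower bound into a Sobolev-norm pairing with a carefully chosen test function. For any smooth, mean-zero $\phi:\mathbb{T}^d\to\mathbb{R}$, Plancherel's identity and Cauchy--Schwarz applied to the spectral formula $E[\rho]=\tfrac{1}{2}\sum_{\bk\neq0}|\bk|^{-d+s}|\hat\rho(\bk)|^{2}$ give
\begin{equation*}
\left|\int_{\mathbb{T}^d}\phi\,(\rho-1)\rd\bx\right|^{2}
=\left|\sum_{\bk\ne 0}\hat\phi(\bk)\,\overline{\hat\rho(\bk)}\right|^{2}
\le 2\, E[\rho]\sum_{\bk\ne 0}|\bk|^{d-s}|\hat\phi(\bk)|^{2}.
\end{equation*}
Setting $r:=d_\infty(\rho,1)$, the target is a test function $\phi$ for which the left-hand pairing is $\gtrsim r^{2d}$ while the Sobolev sum on the right is $\lesssim r^{s}$; rearranging then gives $r\lesssim E[\rho]^{1/(2d-s)}$, matching \eqref{thm_GET_1}.

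To produce $\phi$, I would invoke the geometric dual characterization of Wasserstein-$\infty$: $d_\infty(\rho,1)>r$ iff some open $A\subset\mathbb{T}^d$ satisfies $|A|>\rho(A_r)$, where $A_r$ is the open $r$-neighborhood. Applied with $A$ a ball of radius comparable to $r$, this extracts (up to swapping the roles of $\rho$ and the uniform measure) either a ``deficit ball'' $B(\bx_0,R)$ carrying $\rho$-mass bounded by a fixed fraction of $c_d R^d$, or a ``surplus ball'' exceeding it by a constant. Fix a non-negative smooth bump $\phi_0$ of unit scale on $\mathbb{R}^d$ and set $\phi(\bx):=\phi_0\!\bigl((\bx-\bx_0)/r\bigr)$ minus its mean to enforce $\hat\phi(0)=0$; by construction $\phi$ is concentrated in a ball of radius $\lesssim r$ around the exceptional $\bx_0$, so $|\int\phi(\rho-1)\rd\bx|\gtrsim r^{d}$.

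For the Sobolev side, the rescaling $\hat\phi(\bk)=r^{d}\hat\phi_{0}(r\bk)$ (for $\bk\ne 0$) combined with a Riemann-sum comparison at lattice spacing $r$ yields
\begin{equation*}
\sum_{\bk\ne 0}|\bk|^{d-s}|\hat\phi(\bk)|^{2}
=r^{2d}\sum_{\bk\ne 0}|\bk|^{d-s}|\hat\phi_{0}(r\bk)|^{2}
\;\sim\; r^{s}\int_{\mathbb{R}^d}|\xi|^{d-s}|\hat\phi_{0}(\xi)|^{2}\rd\xi,
\end{equation*}
the integral being finite since $\hat\phi_{0}$ is Schwartz and $|\xi|^{d-s}$ is locally integrable near $0$ (using $s<d<2d$). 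Combined with the duality inequality this produces \eqref{thm_GET_1} for $d\ge 2$ and for $d=1$ with $s<0$ or $0<s<1$. I expect the main obstacle to be the borderline case $d=1,\ s=0$: the weight $|\bk|^{d-s}=|\bk|$ sits exactly at the critical scaling for the Riemann-sum step, and a logarithmic mismatch appears between the discrete sum over $1\le|\bk|\le 1/r$ and the continuous integral, forcing a more delicate construction (for instance a dyadic test function or an optimization of the mollification width against $r$) to recover the $(1+|\log r|)^{1/2}$ correction in \eqref{thm_GET_2}. Finally, the sharpness claim (iii) would be addressed separately by exhibiting explicit near-uniform measures with a ball-sized deficit of radius $r$ and estimating $E[\rho]$ from above via their Fourier coefficients.
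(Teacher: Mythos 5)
This paper does not prove Theorem \ref{thm_GET}: it is imported as a black box from \cite{SW21GET}, so there is no in-paper proof to compare against. Evaluating your proposal on its own merits, the Fourier--Plancherel duality setup, the Cauchy--Schwarz step, the rescaling computation of the Sobolev sum $\sum_{\bk\ne0}|\bk|^{d-s}|\hat\phi(\bk)|^{2}\sim r^{s}$, and the exponent arithmetic $r^{2d-s}\lesssim E[\rho]$ are all correct and are indeed the natural framework for an Erd\H{o}s--Tur\'an-type inequality. The gap is in the geometric step that is supposed to produce a test function witnessing $|\int\phi(\rho-1)\rd\bx|\gtrsim r^{d}$.

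The claim that $d_\infty(\rho,1)=r$ yields a ball $B(\bx_0,R)$ with $R\sim r$ whose $\rho$-mass differs from $|B|$ by a fixed fraction is false. Consider $\rho=1+\epsilon\sin(2\pi x_1)$ on $\mathbb{T}^d$ for small $\epsilon$. The monotone transport in the $x_1$-variable (shifted by a constant) gives $d_\infty(\rho,1)=\tfrac{\epsilon}{2\pi}=:r$. Yet for \emph{every} ball $B$ (of any radius $R\le 1/4$) one has
\begin{equation*}
\bigl|\rho(B)-|B|\bigr|=\Bigl|\epsilon\int_B\sin(2\pi x_1)\rd\bx\Bigr|\le\epsilon\,|B|\,,
\end{equation*}
so the relative imbalance is at most $\epsilon\ll 1$, never a constant fraction. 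Correspondingly, for your bump $\phi$ at scale $r$ around any $\bx_0$, $\sin(2\pi x_1)$ is essentially constant on $\supp\phi$, so $|\int\phi(\rho-1)\rd\bx|\sim\epsilon\, r^{d}\sim r^{d+1}$, which is a full power of $r$ short of the needed $r^{d}$. The Strassen-type characterization of $d_\infty$ only produces \emph{some} witness set $A$ with $|A|>\rho(A_r)$; you cannot select $A$ to be a ball of radius comparable to $r$, and the example shows the witness may have diameter of order $1$ even when $r\ll1$. A correct argument needs a different mechanism (for instance, comparing $\rho(B(\bx_0,R))$ to the Lebesgue measure of balls of radius $R\pm r$ for all scales $R$, or a Beurling--Selberg/Fej\'er-kernel construction in the spirit of the classical Erd\H{o}s--Tur\'an inequality), and it is exactly this step that distinguishes the $\gamma=\tfrac{1}{2d-s}$ regime from the $\gamma=\tfrac12$ regime in part (ii); your single-scale bump cannot see that distinction. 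The sharpness claim (iii) by an explicit hole-type example is fine, but that direction does not repair the stability direction.
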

This is a quantitative result, stating that if a probability measure $\rho$ has energy close to the minimal energy, then it is necessarily close to the unique energy minimizer in the Wasserstein-infinity distance.

Due to the close relation between the discrete and continuous energies, one expects to have the following result:
\begin{quote}
If $N$ is large and a configuration vector $\vec{\bx}$ has its discrete energy $E_N[\vec{\bx}]$ close to the (continuous) minimal energy, then the corresponding empirical measure $\rho_N[\vec{\bx}]$ is necessarily close to the (continuous) energy minimizer.
\end{quote}
Our main result gives a quantitative conclusion in this direction. Within this paper, all constants $c,C$ (possibly with subscripts) are positive, depending on $d$ and $s$, and does not depend on other parameters unless stated otherwise. Each $c,C$ may refer to a different constant.
\begin{theorem}\label{thm2}
Let $W$ be the periodic Riesz potential with parameter $s<d$. Let $\vec{\bx}\in(\mathbb{T}^d)^N$. Then
\begin{equation}\label{thm2_1}
    d_\infty(\rho_N[\vec{\bx}],1) \le C_{(1)}(E_N(\vec{\bx}) + C_{(2)} N^{-\lambda})^\gamma\,,
\end{equation}
where $\lambda>0$ depends on $d,s$, and
\begin{equation}\label{thm2_2}
\gamma = \left\{\begin{split}
& \frac{1}{2d-s},\quad d\ge 2; \text{ or }d=1,\,s<0 \\
& \frac{1}{2},\quad d=1,\,0<s<1 \\
& \frac{1}{2}+\epsilon,\quad d=1,\,s=0
\end{split}\right.\,.
\end{equation}
Here, in the case $d=1,\,s=0$, $\epsilon$ can be taken as any small positive number with $C_{(1)}$ depending on $\epsilon$.
\end{theorem}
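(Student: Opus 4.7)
The strategy is to reduce the discrete stability problem to the continuous one (Theorem \ref{thm_GET}) via a mollification. Fix a smooth, radial, nonnegative bump $\phi\in C^\infty(\mathbb{T}^d)$ supported in the unit ball with $\int\phi=1$, and set $\phi_\epsilon(\bx):=\epsilon^{-d}\phi(\bx/\epsilon)$ with $\epsilon$ to be chosen later. Let $\rho_N^\epsilon:=\rho_N[\vec{\bx}]*\phi_\epsilon$, which is absolutely continuous. Because convolution with $\phi_\epsilon$ transports each Dirac mass over a set of diameter at most $2\epsilon$, we have the trivial bound $d_\infty(\rho_N[\vec{\bx}],\rho_N^\epsilon)\le \epsilon$. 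Applying Theorem \ref{thm_GET} to $\rho_N^\epsilon$ and using the triangle inequality yields
\begin{equation*}
d_\infty(\rho_N[\vec{\bx}],1)\ \le\ \epsilon\ +\ C\,E[\rho_N^\epsilon]^{\gamma},
\end{equation*}
with the appropriate logarithmic modification in the borderline case $d=1,\,s=0$.

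The core technical step is to bound $E[\rho_N^\epsilon]$ by $E_N(\vec{\bx})$ plus a correction that is small in $N$. Using Plancherel,
\begin{equation*}
2E[\rho_N^\epsilon]\ =\ \sum_{\bk\ne0}|\bk|^{-d+s}|\hat{\phi}_\epsilon(\bk)|^2|\hat{\rho}_N(\bk)|^2\ =\ \frac{1}{N^2}\sum_{i,j=1}^{N}W_s^{[\epsilon]}(\bx_i-\bx_j),
\end{equation*}
where $W_s^{[\epsilon]}:=W_s*\phi_\epsilon*\tilde{\phi}_\epsilon$ and $\tilde\phi_\epsilon(\bx)=\phi_\epsilon(-\bx)$. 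Separating diagonal from off-diagonal gives $2E[\rho_N^\epsilon]=\tfrac{N-1}{N}\cdot\tfrac{1}{N(N-1)}\sum_{i\ne j}W_s^{[\epsilon]}(\bx_i-\bx_j)+\tfrac{1}{N}W_s^{[\epsilon]}(0)$. An elementary computation shows $W_s^{[\epsilon]}(0)\lesssim \epsilon^{-s}$ for $s>0$, $\lesssim |\log\epsilon|$ for $s=0$, and $O(1)$ for $s<0$. The crucial \emph{mollification lemma} I would establish is a pointwise/pairwise comparison implying
\begin{equation*}
\frac{1}{2N(N-1)}\sum_{i\ne j}W_s^{[\epsilon]}(\bx_i-\bx_j)\ \le\ E_N(\vec{\bx})\ +\ (\text{lower order in }\epsilon),
\end{equation*}
so that altogether $E[\rho_N^\epsilon]\le E_N(\vec{\bx})+C\epsilon^{-s}/N$ (up to logarithmic corrections). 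For $|\bx_i-\bx_j|\gtrsim\epsilon$ a Taylor expansion gives $|W_s^{[\epsilon]}-W_s|\lesssim \epsilon^2|\bx_i-\bx_j|^{-s-2}$, which sums to a controlled quantity since $W_s$ is lower-semicontinuous and periodic. For $|\bx_i-\bx_j|\ll\epsilon$ we use $W_s^{[\epsilon]}(\bx_i-\bx_j)\le W_s^{[\epsilon]}(0)\lesssim\epsilon^{-s}$ and exploit that in this regime $W_s(\bx_i-\bx_j)$ is already very large and positive, so the number of such close pairs is itself controlled in terms of $E_N$.

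Finally, I would substitute the bound on $E[\rho_N^\epsilon]$ into the triangle-inequality estimate and optimize in $\epsilon$: balancing the $\epsilon$ transport loss against $(\epsilon^{-s}/N)^{\gamma}$ gives $\epsilon\sim N^{-\gamma/(1+s\gamma)}$ and produces an error of the form $N^{-\lambda}$ inside a $\gamma$-power, yielding \eqref{thm2_1} with $\lambda=1/(2d)$ (and the appropriate analogues in the one-dimensional cases, where the choice $\gamma=1/2+\epsilon$ absorbs the logarithmic factor when $s=0$).

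The main obstacle I expect is the mollification lemma, especially the close-pair contribution: a priori one could have $O(N^2)$ pairs with $|\bx_i-\bx_j|\ll\epsilon$, and only by feeding back a lower bound of the form $W_s(\bx)\gtrsim |\bx|^{-s}-C$ on the singularity can one convert smallness of $E_N$ into scarcity of close pairs. Making this quantitative without circularity — so that the final bound is unconditional in $E_N(\vec{\bx})$ as stated in Theorem \ref{thm2} — is the delicate step; the argument is trivial when $E_N(\vec{\bx})$ is of order $1$ (the right-hand side of \eqref{thm2_1} already exceeds the torus diameter) and needs to be run only in the regime where $E_N$ is small.
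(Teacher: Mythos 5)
The high-level strategy in your proposal (mollify $\rho_N$, express $E[\rho_N*\phi_\epsilon]$ via the mollified potential, compare to $E_N$, then apply Theorem~\ref{thm_GET} and a triangle inequality) is exactly the paper's strategy, and your Plancherel reduction and diagonal/off-diagonal split are correct. However, there is a real gap in the ``mollification lemma,'' and it is precisely the step you flag as ``delicate.''

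The problem is that a single, deterministically chosen $\epsilon$ cannot work. Decompose the off-diagonal sum into dyadic shells $\sigma_k$ indexed by $|\bx_i-\bx_j|\approx\epsilon A^{-k}$. For shells far from the mollification scale, your Taylor estimate ({\bf W3}) and the pointwise upper bound ({\bf W1}) control the change of energy by small or negative amounts, just as you say. But for the shell \emph{at} the mollification scale, i.e.\ $|\bx_i-\bx_j|\sim\epsilon$, the only available estimate is ({\bf W2}): $W^\epsilon(\bx)\le C_2\,W(\bx)$, so the change of energy in this shell can be as large as $(C_2-1)\sigma_k$, a constant multiple of the energy already sitting there. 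If the particular shell near your chosen $\epsilon$ happens to carry a fixed fraction of $E_N(\vec{\bx})$, the resulting bound is $E[\rho]\le C\,E_N(\vec{\bx})$ with $C>1$, not $E[\rho]\le E_N(\vec{\bx})+CN^{-\lambda}$. No optimization over a single scalar $\epsilon$ can avoid this, because the adversary can place the mass in whatever shell your formula selects. (Your proposed escape, ``only by feeding back a lower bound on $W_s$ can one convert smallness of $E_N$ into scarcity of close pairs,'' addresses the wrong regime: pairs with $|\bx_i-\bx_j|\ll\epsilon$ actually \emph{lose} energy under mollification, since $W^\epsilon\lesssim\epsilon^{-s}\ll W$; the dangerous pairs are those with $|\bx_i-\bx_j|\sim\epsilon$, and smallness of $E_N$ does not force them to be scarce.)

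The paper's resolution, and the genuinely new idea, is to \emph{not} fix $\epsilon$ but to set up $K\sim\log N$ dyadic candidate scales $\epsilon_1,\dots,\epsilon_K$ and prove that at least one of them works (Theorem~\ref{thm1}, proved via Lemma~\ref{lem_ind}). The key structural fact is a cascade/pigeonhole argument: either some candidate $\epsilon_k$ already gives $E^{\epsilon_k}_N(\vec{\bx})\le E_N(\vec{\bx})$, or the shell sums $\sigma_k$ must decay geometrically, forcing $\sigma_K$ to be exponentially small in $K\sim\log N$, hence the final candidate $\epsilon_K$ gives an error $\beta^K\lesssim N^{-\lambda}$ with a computable $\lambda$. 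Your proposal does not contain this selection mechanism, and without it the argument does not close. Relatedly, your claimed $\lambda=1/(2d)$ is not justified by the computation you outline; the exponent in the paper depends on the constants $C_1,C_2,C_3$ from the mollification estimates and on the decay rate $\beta$ of Lemma~\ref{lem_ind} (see Remark~\ref{rem_lambda}), and is generally smaller.
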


In the case $d=1,\,s=0$ one can get a better inequality with $\gamma=\frac{1}{2}$ and an extra logarithmic factor (following the corresponding case in Theorem \ref{thm_GET}), but we do not state it here for the sake of conciseness.

\begin{remark}
Theorem \ref{thm2} also works for small smooth perturbations of the periodic Riesz potential. This is to say, if one fixes a smooth perturbation profile $\Phi(\bx)$ on $\mathbb{T}^d$ with $\Phi(\bx)=\Phi(-\bx)$ and $\int_{\mathbb{T}^d}\Phi(\bx)\rd{\bx}=0$, then Theorem \ref{thm2} works for $W(\bx)=W_s(\bx)+\epsilon \Phi(\bx)$ for all sufficiently small $|\epsilon|$. To see this, we first recall from \cite[Theorem 1.3]{SW21GET} that Theorem \ref{thm_GET} works for such $W$ because the positive-definite condition $\hat{W}(\bk) \ge c|\bk|^{-d+s},\,\forall \bk\ne 0$ is satisfied for sufficiently small $|\epsilon|$ (since $\hat{\Phi}$ has rapid decay), and the regularity assumptions therein on $W$ are satisfied. Besides the application of Theorem \ref{thm_GET}, the proof of Theorem \ref{thm2} in this paper only uses the properties of the potential $W$ as stated in Lemmas \ref{lem_period}, \ref{lem_W} and \ref{lem_Wlog}. It is clear that all these properties are still true in the presence of a smooth perturbation.
\end{remark}

\begin{remark}
The exponent $\lambda$ can be taken as 1 when $s<0$. When $0<s<d$, the choice of $\lambda$ follows Theorem \ref{thm1}, which can be calculated explicitly as outlined in Remark \ref{rem_lambda}. When $s=0$, one can take $\lambda=1-\epsilon$ for arbitrarily small $\epsilon$ (following Theorem \ref{thm1log}, a variant of Theorem \ref{thm1}). 
\end{remark}

\begin{remark}
We do not expect the exponent $\lambda$ to be sharp. However, we do know that $\lambda$ cannot be greater than $\frac{1}{d\gamma}$ in such an inequality. In fact, taking $\vec{\bx}$ to be a minimizer of $E_N$, then Lemma \ref{lem_min} gives $E_N(\vec{\bx}) = \min E_N \le \min E = 0$. Therefore the RHS of \eqref{thm2_1} satisfies $C(E_N(\vec{\bx}) + C N^{-\lambda})^\gamma \le C N^{-\lambda\gamma}$. The LHS of \eqref{thm2_1} is at least $c N^{-1/d}$. In fact, the definition of $d_\infty$ shows that for $\vec{\bx} = (\bx_1,\dots,\bx_N)$, the balls centered at $\bx_1,\dots,\bx_N$ with radius $d_\infty(\rho_N[\vec{\bx}],1)$ necessarily covers $\mathbb{T}^d$. Therefore $d_\infty(\rho_N[\vec{\bx}],1)^d |B(0;1)| \cdot N \ge 1$, leading to $d_\infty(\rho_N[\vec{\bx}],1) \ge c N^{-1/d}$. Combining with \eqref{thm2_1}, we see that $c N^{-1/d} \le C N^{-\lambda\gamma}$ (for any $N$), which implies that $\lambda \le \frac{1}{d\gamma}$.
\end{remark}


The mean field limit of energy minimizers is known to be true under very mild assumptions of $W$, up to taking a subsequence (this is necessary because the continuous energy minimizer may not be unique for general $W$). This has been justified on compact underlying sets \cite[Theorem 4.2.2]{saffbook2} and the whole Euclidean space \cite{CP18}. For the periodic Riesz potential with $0\le s < d$, the minimal energy of $E_N$ has been quantified by \cite{HSSS} as 
\begin{equation}
\min E_N = \left\{\begin{array}{ll}
-C N^{-1+s/d} + o(N^{-1+s/d}), & 0<s<d \\
-C N^{-1} \ln N + \tilde{C} N^{-1} + o(N^{-1}), & s=0
\end{array}\right.
\end{equation}
where $C>0$ and $\tilde{C}\in\mathbb{R}$ are constants depending on $s$. 
For the quantitative convergence of discrete minimizers to the continuous minimizer, if $d-2\le s <d$ (Newtonian or more singular potentials), the techniques developed for the modulated energy in \cite{LS18,Ser20b} can be applied to obtain a quantitative mean field limit in negative Sobolev spaces. Similar results can be obtained for $0\le s < d-2$ using the methods in \cite{NRS}. In the case $s=d-2$, \cite{CHM} obtained a quantitative mean field limit in the Wasserstein-1 distance for whole-space minimizers with confining potentials. \cite{LS} gives a quantitative `regularity' estimate for the discrete minimizer in terms of the size of the Fourier coefficients of the empirical measure. However, we are not aware of any existing results which quantify the distance between discrete and continuous minimizers in the Wasserstein-$p$ metric other than $p=1$.

In this paper we apply our main result, Theorem \ref{thm2}, to obtain the following quantitative mean field limit.
\begin{theorem}\label{thm3}
Let $W$ be the periodic Riesz potential with parameter $s<d$. Let $\vec{\bx}$ be a minimizer of $E_N$. Then
\begin{equation}
    d_\infty(\rho_N[\vec{\bx}],1) \le C N^{-\lambda \gamma}\,,
\end{equation}
where $\gamma,\lambda$ are the same as in Theorem \ref{thm2}. 
\end{theorem}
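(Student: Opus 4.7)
The proof should be essentially immediate from Theorem \ref{thm2} combined with Lemma \ref{lem_min}, exactly in the spirit of the sharpness discussion in the remark preceding the statement. The plan is to feed a minimizer $\vec{\bx}$ of $E_N$ into the stability estimate \eqref{thm2_1} and then kill the $E_N(\vec{\bx})$ term using the inequality $\min E_N \le \min E = 0$, leaving only the $N^{-\lambda}$ contribution to the right-hand side.

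In more detail, first I would apply Theorem \ref{thm2} directly to the given minimizer $\vec{\bx}$ to obtain
\begin{equation*}
d_\infty(\rho_N[\vec{\bx}],1) \le C_{(1)}\bigl(E_N(\vec{\bx}) + C_{(2)} N^{-\lambda}\bigr)^\gamma.
\end{equation*}
Next, since $\vec{\bx}$ is a minimizer of $E_N$, we have $E_N(\vec{\bx}) = \min E_N$. Because the continuous minimum is $\min E = E[1] = 0$ (the uniform distribution is the unique continuous minimizer of the periodic Riesz energy, as recalled in the introduction via the Fourier representation), Lemma \ref{lem_min} yields $\min E_N \le \min E = 0$. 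Hence $E_N(\vec{\bx}) \le 0$, and the bracket in the estimate above is controlled purely by the $N^{-\lambda}$ correction.

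Substituting and using the monotonicity of $t \mapsto t^\gamma$ on $[0,\infty)$, I obtain
\begin{equation*}
d_\infty(\rho_N[\vec{\bx}],1) \le C_{(1)}\bigl(C_{(2)} N^{-\lambda}\bigr)^\gamma = C N^{-\lambda \gamma},
\end{equation*}
with $\gamma$ and $\lambda$ as in Theorem \ref{thm2}, which is exactly the claim. There is no real obstacle in this argument; all the difficulty has been absorbed into Theorem \ref{thm2}, whose proof (via mollification of the empirical measure plus an application of Theorem \ref{thm_GET}) is where the actual work lies. Theorem \ref{thm3} is best viewed as a clean corollary illustrating how the discrete-to-continuous stability estimate combines with the elementary comparison $\min E_N \le \min E$ to yield a fully quantitative mean field limit in $d_\infty$.
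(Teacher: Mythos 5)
Your proposal is correct and follows exactly the same route as the paper: apply Theorem \ref{thm2} to the minimizer, use Lemma \ref{lem_min} together with $\min E = E[1] = 0$ to conclude $E_N(\vec{\bx}) \le 0$, and then bound the bracket by $C_{(2)} N^{-\lambda}$. No difference from the paper's argument.
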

In particular, if for each $N$, $\vec{\bx}_{(N)}$ is a minimizer of $E_N$, then we have the quantitative convergence of $\rho_N[\vec{\bx}_{(N)}]$ to the continuous minimizer 1 in the sense that $d_\infty(\rho_N[\vec{\bx}_{(N)}],1) \le C N^{-\lambda \gamma}$. 


\begin{proof}[Proof of Theorem \ref{thm3}]
%
Since $\min E=0$ (achieved by the uniform distribution), we get $\min E_N \le 0$ from Lemma \ref{lem_min}. Let $\vec{\bx}$ be a minimizer of $E_N$. Then $E_N(\vec{\bx})\le 0$, and it follows from Theorem \ref{thm2} that 
\begin{equation}
    d_\infty(\rho_N[\vec{\bx}],1) \le C(E_N(\vec{\bx}) + C N^{-\lambda})^\gamma  \le C N^{-\lambda \gamma}\,.
\end{equation}
\end{proof}

An immediate consequence of Theorem \ref{thm3} is that any particle in a minimizer configuration cannot be too far from its closest neighbor.
\begin{theorem}
Let $W$ be the periodic Riesz potential with parameter $s<d$. Let $\vec{\bx}=(\bx_1,\dots,\bx_N)$ be a minimizer of $E_N$. Then, for any $i=1,\dots,N$,
\begin{equation}
    \min_{j\in \{1,\dots,N\},\,j\ne i}|\bx_i-\bx_j| \le CN^{-\lambda \gamma}
\end{equation}
where $\gamma,\lambda$ are the same as in Theorem \ref{thm2}. 
\end{theorem}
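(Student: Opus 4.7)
The plan is to deduce this from Theorem \ref{thm3} by a short covering argument, essentially an elaboration of the reasoning already sketched in the remark that bounded $\lambda$ in terms of $\gamma$ and $d$. Set $r := CN^{-\lambda\gamma}$, where $C$ is the constant furnished by Theorem \ref{thm3}, so that $d_\infty(\rho_N[\vec{\bx}],1) \le r$. Unpacking the definition of the Wasserstein-infinity distance, there exists a transport plan from $\rho_N[\vec{\bx}]$ to the uniform measure under which the Dirac mass $\frac{1}{N}\delta_{\bx_j}$ is sent to a Borel set $S_j\subset \bar{B}(\bx_j,r)$ of Lebesgue measure $1/N$. Since $\sum_j |S_j|=1=|\mathbb{T}^d|$ and each $S_j$ is contained in $\bar{B}(\bx_j,r)$, the union $\bigcup_j \bar B(\bx_j,r)$ has full measure; being a closed set with open complement of measure zero, it equals all of $\mathbb{T}^d$.

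Now fix $i$ and assume, aiming for a contradiction, that $\min_{j\ne i}|\bx_i-\bx_j|>3r$. Then for every $j\ne i$ the ball $\bar{B}(\bx_j,r)$ lies at distance strictly greater than $2r$ from $\bx_i$, hence is disjoint from $\bar{B}(\bx_i,2r)$. In particular, the only ball in the cover $\{\bar B(\bx_j,r)\}_{j=1}^N$ that can intersect $\bar{B}(\bx_i,2r)$ is $\bar{B}(\bx_i,r)$ itself. By the covering statement this forces $\bar{B}(\bx_i,2r)\subset \bar{B}(\bx_i,r)$, which is impossible as soon as a distance of $2r$ is actually attained on the torus, i.e., as soon as $2r\le \sqrt{d}/2$. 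Thus $\min_{j\ne i}|\bx_i-\bx_j|\le 3r$ in this regime.

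For the (finitely many) small values of $N$ where $r>\sqrt{d}/4$, the inequality holds trivially after enlarging the constant $C$, because the left-hand side is bounded by the diameter $\sqrt{d}/2$ of $\mathbb{T}^d$ while $N^{-\lambda\gamma}$ is bounded below. There is no real obstacle here: the only substantive input is Theorem \ref{thm3}, and the covering argument is a one-line consequence of the definition of $d_\infty$. The proof therefore amounts to packaging these two ingredients.
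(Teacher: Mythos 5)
Your proof is correct and rests on essentially the same idea as the paper: the characterization of $d_\infty(\rho_N[\vec\bx],1)$ in terms of covering $\mathbb{T}^d$ by balls of that radius around the $\bx_j$, combined with Theorem \ref{thm3}. The paper's version is slightly more direct — it takes a point $\bx_*$ with $|\bx_i-\bx_*|=r/2$ where $r$ is the nearest-neighbor distance, observes that every particle is then at distance at least $r/2$ from $\bx_*$, and reads off $d_\infty(\rho_N[\vec\bx],1)\ge r/2$ immediately; this avoids the contradiction argument and the separate treatment of small $N$, and gives the constant $2C$ in place of your $3C$.
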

\begin{proof}
For a fixed $i$, denote $r:=\min_{j\in \{1,\dots,N\},\,j\ne i}|\bx_i-\bx_j|$. Then $B(\bx_i,r)$ contains no particle other than $\bx_i$. Therefore, letting $\bx_*$ be a point such that $|\bx_i-\bx_*|=r/2$, we see that $B(\bx_*,r/2)$ contains no particle. By the definition of the Wasserstein-infinity distance, we have $d_\infty(\rho_N[\vec{\bx}],1) \ge r/2$. Therefore the conclusion follows from Theorem \ref{thm3}
\end{proof}

The rest of this paper is organized as follows: in Section \ref{sec_thm2} we prove Theorem \ref{thm2} based on a key intermediate result, Theorem \ref{thm1}, for singular potentials. In Section \ref{sec_Ws} we give some basic properties of the periodic Riesz potential $W_s$ and its mollifications. In Section \ref{sec_thm1s0d} we prove Theorem \ref{thm1} for $0<s<d$. In Section \ref{sec_thm1s0} we prove Theorem \ref{thm1} for $s=0$, i.e., the case of logarithmic potential. 

\section{Proof of Theorem \ref{thm2}}\label{sec_thm2}

In this section we give the proof of Theorem \ref{thm2}, in which the proof of the key intermediate result, Theorem \ref{thm1}, is delayed to Sections \ref{sec_thm1s0d} and \ref{sec_thm1s0}.

For periodic Riesz potentials $W_s$ with $s<0$, $W_s$ is a continuous function on $\mathbb{T}^d$ (see Lemma \ref{lem_period} below), and Theorem \ref{thm2} is a straightforward consequence of Theorem \ref{thm_GET}, with $\lambda=1$.

\begin{proof}[Proof of Theorem \ref{thm2} when $s<0$]
\begin{equation}
E_N(\vec{\bx}) = \frac{1}{2N(N-1)}\left(2N^2 E[\rho_N[\vec{\bx}]] - \sum_{i=1}^NW_s(0)\right) = \frac{N}{N-1}E[\rho_N[\vec{\bx}]] - \frac{1}{2(N-1)}W_s(0)\,.
\end{equation}
By applying Theorem \ref{thm_GET} to $\rho_N[\vec{\bx}]$, we get
\begin{equation}
d_\infty(\rho_N[\vec{\bx}],1) \le C E[\rho_N[\vec{\bx}]]^\gamma = C\Big(\frac{N-1}{N}E_N(\vec{\bx}) + \frac{1}{2N}W_s(0) \Big)^\gamma \le C(E_N(\vec{\bx}) + CN^{-1} )^\gamma\,,
\end{equation}
where $\gamma$ is given by \eqref{thm2_2}. Here notice that the sign of $E_N(\vec{\bx})$  is unknown, and the last inequality is true due to the fact that the fact that $\frac{1}{2}\le \frac{N-1}{N} \le 1$.
\end{proof}

However, this proof does not work for $0\le s < d$ because $W_s(0)=\infty$. As a consequence, $E[\rho_N]=\infty$ for any empirical measure $\rho_N$, and no information can be obtained by applying Theorem \ref{thm_GET} to $\rho_N$. To overcome this difficulty and prove Theorem \ref{thm2} when $0\le s < d$, the key step is to replace an empirical measure $\rho_N$ by a suitable approximation, as stated below.
\begin{theorem}\label{thm1}
Let $W$ be the periodic Riesz potential with parameter $s\in [0,d)$. Let $\vec{\bx}\in(\mathbb{T}^d)^N$. Then there exists a probability measure $\rho$ on $\mathbb{T}^d$, such that
\begin{equation}
    d_\infty(\rho,\rho_N[\vec{\bx}]) \le  N^{-\lambda}\,,
\end{equation}
and 
\begin{equation}
    E[\rho] - E_N(\vec{\bx}) \le C N^{-\lambda}\max\{E_N(\vec{\bx}),1\}\,,
\end{equation}
where $\lambda>0$ depends on $d,s$.
\end{theorem}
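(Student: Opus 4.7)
The plan is to take $\rho$ to be a mollification of the empirical measure $\rho_N[\vec{\bx}]$ at a length scale $\epsilon = N^{-\mu}$, with $\mu > 0$ to be chosen at the end. Concretely, fix a smooth, symmetric, non-negative probability density $\varphi \in C_c^\infty(\mathbb{R}^d)$ supported in $B(0,1)$, set $\varphi_\epsilon(\bx) = \epsilon^{-d}\varphi(\bx/\epsilon)$, and let $\rho := \rho_N[\vec{\bx}] * \varphi_\epsilon$. The Wasserstein-infinity bound is essentially free: each Dirac mass $\delta_{\bx_i}/N$ can be transported to $\varphi_\epsilon(\cdot - \bx_i)/N$ without moving any mass by more than $\epsilon$, so $d_\infty(\rho, \rho_N[\vec{\bx}]) \le \epsilon = N^{-\mu}$.

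The main work is the energy comparison. Writing $\tilde W := W_s * \varphi_\epsilon * \varphi_\epsilon$, a bounded function on $\mathbb{T}^d$, the convolution identity gives $E[\rho] = \frac{1}{2N^2}\sum_{i,j}\tilde W(\bx_i-\bx_j)$, so separating $i=j$ from $i\ne j$ yields
\begin{equation*}
E[\rho] - \tfrac{N-1}{N}E_N(\vec{\bx}) \;=\; \frac{1}{2N^2}\sum_{i\ne j}(\tilde W-W_s)(\bx_i-\bx_j) \;+\; \frac{\tilde W(0)}{2N}.
\end{equation*}
The diagonal term is of order $\epsilon^{-s}/N$ for $s>0$ (or $N^{-1}\log(1/\epsilon)$ for $s=0$). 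For the off-diagonal sum I would split into \textit{far} pairs with $|\bx_i-\bx_j|>2\epsilon$ --- where symmetry of $\varphi_\epsilon$ kills the first-order Taylor term and yields the pointwise bound $|\tilde W(\bx)-W_s(\bx)| \le C\epsilon^2|\bx|^{-s-2}$ --- and \textit{near} pairs with $|\bx_i-\bx_j|\le 2\epsilon$, on which both potentials are comparable to $\epsilon^{-s}$.

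To upgrade these pointwise bounds to a total excess of the form $CN^{-\lambda}\max(E_N(\vec{\bx}),1)$, the key input is that near pairs are forced to contribute heavily to $E_N$: the singular lower bound $W_s(\bx) \ge c|\bx|^{-s} - C$ near the origin, combined with the uniform lower bound on $W_s$, gives $\#\{\text{near pairs}\} \lesssim \epsilon^s N^2 \max(E_N(\vec{\bx}),1)$. Multiplying by the pointwise near-pair excess $O(\epsilon^{-s})$ absorbs into $\max(E_N,1)$, and a further $\epsilon$-power gain can be extracted by working at a slightly finer dyadic scale. The far-pair Taylor error is summed over dyadic annuli $|\bx_i-\bx_j|\in[2^k\epsilon,2^{k+1}\epsilon]$, the radial weight $|\bx|^{-s-2}$ being absorbed into $E_N$ through the pointwise comparison $\epsilon^2|\bx|^{-s-2} \le |\bx|^{-s}$.

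The main obstacle is arranging the estimates so that the off-diagonal excess really carries a power-of-$\epsilon$ gain against $\max(E_N,1)$, not merely an $O(1)$ prefactor; the pinch point is the annulus $|\bx|\sim\epsilon$, where the near-pair and Taylor bounds are of the same order $\epsilon^{-s}$ and neither offers a small factor by itself. Balancing the three resulting constraints --- $\epsilon\le N^{-\lambda}$ (from $d_\infty$), $\epsilon^{-s}/N\le N^{-\lambda}$ (from the diagonal), and the power-of-$\epsilon$ gain against $\max(E_N,1)$ on the off-diagonal --- then determines $\mu$ and yields the exponent $\lambda$ asserted in the theorem. The $s=0$ case is structurally parallel with $\log(1/\epsilon)$ in place of $\epsilon^{-s}$, giving a slightly worse but still positive $\lambda$.
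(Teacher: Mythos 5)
Your construction is the right one and matches the paper's: mollify $\rho_N[\vec{\bx}]$ at scale $\epsilon$, observe $E[\rho_N*\varphi_\epsilon] = \frac{1}{2N^2}\sum_{i,j}(W_s*\varphi_\epsilon*\varphi_\epsilon)(\bx_i-\bx_j)$, handle the diagonal via the $\epsilon^{-s}$ (or $\log(1/\epsilon)$) bound on the mollified potential at $0$, and split the off-diagonal into near and far pairs using a second-order Taylor bound $|\tilde W-W_s|\lesssim\epsilon^2|\bx|^{-s-2}$ for $|\bx|\ge 2\epsilon$. You also correctly obtain $\#\{\text{near pairs}\}\lesssim\epsilon^sN^2\max(E_N,1)$ from the $|\bx|^{-s}$ singularity. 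All of this appears in the paper (Lemma~\ref{lem_W} and Step~1--2 of Section~\ref{sec_thm1s0d}), and your $d_\infty$ bound is fine.

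The genuine gap is exactly where you flag it: with a \emph{single} fixed $\epsilon$, the annulus $|\bx_i-\bx_j|\sim\epsilon$ gives an off-diagonal excess of order $O(1)\cdot\max(E_N,1)$ with no small factor, because the near-pair count $\epsilon^sN^2\max(E_N,1)$ exactly cancels the per-pair excess $\epsilon^{-s}$, and the nearest dyadic far annulus $|\bx|\in[2\epsilon,4\epsilon]$ costs a constant multiple of the local energy $\sigma$ there. One cannot salvage this by merely ``working at a slightly finer dyadic scale'': if the configuration has essentially all its close pairs at one fixed separation $r$, then any $\epsilon$ just below $r$ incurs an $O(1)\,\sigma_{r}$ loss on that shell and gets no compensating gain. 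No single choice of $\epsilon$ works uniformly over configurations, so the proposed ``balance the three constraints, determine $\mu$'' step cannot go through as stated.

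What the paper does, and what is missing from your proposal, is a pigeonhole/recursion over a geometric ladder of candidate scales $\epsilon_k=\epsilon_0A^{-k}$, $k=1,\dots,K$, together with the crucial observation that you are not using: on pairs with $|\bx_i-\bx_j|$ much \emph{smaller} than the mollification radius $\epsilon_k$, the mollified potential $W^{\epsilon_k}$ is drastically smaller than $W_s$ (by property ({\bf W1}) vs.\ $W_s\sim a_s|\bx|^{-s}$), so those pairs contribute \emph{negatively} to $E^{\epsilon_k}_N-E_N$, in fact $\sigma_{l,\epsilon_k}-\sigma_l\le-\frac12\sigma_l$ for $l>k$. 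This turns the off-diagonal comparison into
\begin{equation*}
E^{\epsilon_k}_N-E_N \;\le\; \cC_EA^{-2(k-1/2)} + C\!\!\sum_{l<k}A^{-2(k-l-1/2)}\sigma_l + (C_2-1)\sigma_k - \tfrac12\!\!\sum_{l>k}\sigma_l .
\end{equation*}
Either some $k$ makes the right-hand side negative, in which case $\epsilon_k$ works; or Lemma~\ref{lem_ind} (an induction on partial sums $S_k=\sum_{l\ge k}\sigma_l$) forces geometric decay $\sigma_k\lesssim\cC_E\beta^k$, and then the finest scale $\epsilon_K$ gives $E^{\epsilon_K}_N-E_N\lesssim\cC_E\beta^K\lesssim\cC_EN^{-\lambda}$. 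Without both ingredients---the negativity of the deep-near-pair contribution and the sweep over candidate scales---your estimate stalls at $O(1)\max(E_N,1)$, as you yourself noted.
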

Here $\rho$ is an approximation of the empirical measure $\rho_N[\vec{\bx}]$, with the $d_\infty$-distance small, and the possible increment in energy $E[\rho]-E_N(\vec{\bx})$ also small. Assuming Theorem \ref{thm1}, we can prove Theorem \ref{thm2} when $0\le s < d$ as follows.
\begin{proof}[Proof of Theorem \ref{thm2} when $0\le s < d$]
Since $d_\infty(\rho_N[\vec{\bx}],1) \le \sqrt{d}$ always holds, we may assume $E_N(\vec{\bx}) \le 1$ without loss of generality. Let $\rho$ as given in Theorem \ref{thm1}. Then Theorem \ref{thm_GET} gives
\begin{equation}
    d_\infty(\rho,1) \le CE[\rho]^\gamma \le C(E_N(\vec{\bx}) + C N^{-\lambda})^\gamma\,,
\end{equation}
where $\gamma$ is given by \eqref{thm2_2} (here for the case $d=1,\,s=0$ one can easily convert the logarithmic factor in \eqref{thm_GET_2} into a small power). Thus
\begin{equation}\begin{split}
    d_\infty(\rho_N[\vec{\bx}],1) \le & d_\infty(\rho,1) + d_\infty(\rho_N[\vec{\bx}],\rho)  \le  C(E_N(\vec{\bx}) + C N^{-\lambda})^\gamma + N^{-\lambda}\\
     \le &  C(E_N(\vec{\bx}) + C N^{-\lambda})^\gamma \,,
\end{split}\end{equation}
where $\gamma<1$ is used in the last inequality.
\end{proof}

The proof of Theorem \ref{thm1} (in Sections \ref{sec_thm1s0d} and \ref{sec_thm1s0}) is the most technical part in this paper. Notice that in the formal expression $E[\rho_N[\vec{\bx}]]=\frac{1}{2N^2}\sum_{i,j}W(\bx_i-\bx_j)$, the major difference from $E_N(\vec{\bx})$ is the \emph{diagonal} terms, i.e., those with $i=j$. When $W(0)=\infty$, it is exactly these terms that make $E[\rho_N[\vec{\bx}]]$ infinite. The construction of $\rho$ is simply a mollification of the empirical measure $\rho_N[\vec{\bx}]$ to make the diagonal terms behave nicer, but the mollification radius $\epsilon\ll 1$ has to been chosen very carefully:
\begin{itemize}
\item If $\epsilon$ is too small, then the diagonal terms may still be too large.
\item If $\epsilon$ is too large, then $\rho$ may look very different from $\rho_N[\vec{\bx}]$, and the change in off-diagonal terms may get out of control.
\end{itemize}
To choose the correct $\epsilon$, we take a dyadic sequence of candidates $\epsilon_1,\dots,\epsilon_K$. For each $\epsilon_k$, we need to analyze the effect of the mollification on the $(i,j)$ term in the energy, according to the relative size between $\epsilon_k$ and $|\bx_i-\bx_j|$. Then we manage to show that at least one of the candidates will work.

\section{Basic properties of the periodic Riesz potential $W_s$}\label{sec_Ws}

In this section we give some basic properties of the periodic Riesz potential $W_s$ and its mollifications. Recall the singularity structure of $W_s$.
\begin{lemma}\cite[Lemma 9.1]{SW21GET}\label{lem_period}
For any real $s<d$, the Riesz potential $W_s$ is smooth away from 0, and identifying $\mathbb{T}^d$ with $[-1/2,1/2)^d$, we have
\begin{itemize}
    \item If $s\ne 0,-2,-4,\dots$, then  
    \begin{equation}
        W_s(\bx)-a_s|\bx|^{-s},\quad a_s=\pi^{d/2-s}\frac{\Gamma(s/2)}{\Gamma((d-s)/2)}
    \end{equation}
    is smooth near 0.
    \item If $s= -2n,\,n\in\mathbb{Z}_{\ge 0}$, then 
    \begin{equation}
        W_s(\bx)-\pi^{d/2-s}\frac{1}{\Gamma((d-s)/2)}\Big(\frac{2(-1)^{n+1}}{n!} |\bx|^{-s}\ln|\bx|+\frac{(-1)^n}{n!}\sum_{k=1}^n \frac{1}{k}|\bx|^{-s}\Big)
    \end{equation}
    is smooth near 0.
\end{itemize}
\end{lemma}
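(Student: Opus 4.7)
The plan is to identify $W_s$ with the lattice periodization of a suitably normalized Euclidean Riesz kernel via the Poisson summation formula, and then read off the local behavior at the origin directly.

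First I would invoke the classical Fourier transform identity on $\mathbb{R}^d$, with the convention $\hat{f}(\xi)=\int e^{-2\pi i \xi\cdot \bx}f(\bx)\rd{\bx}$:
$$\mathcal{F}(|\bx|^{-s})(\xi) = a_s^{-1}\,|\xi|^{-(d-s)}, \qquad a_s = \pi^{d/2-s}\frac{\Gamma(s/2)}{\Gamma((d-s)/2)},$$
valid for $0<s<d$ and extended to other $s$ as a tempered distribution. Applying Poisson summation formally to $f(\bx)=a_s|\bx|^{-s}$ gives
$$\sum_{\bn\in\mathbb{Z}^d} a_s|\bx-\bn|^{-s} = \sum_{\bk\in\mathbb{Z}^d} |\bk|^{-(d-s)}\, e^{2\pi i \bk\cdot \bx},$$
whose right-hand side with the $\bk=0$ summand removed is precisely $W_s(\bx)$. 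Near $\bx=0$, the only singular term on the left is the $\bn=0$ contribution $a_s|\bx|^{-s}$, while the tail $\sum_{\bn\ne 0}a_s|\bx-\bn|^{-s}$ is real-analytic on a neighborhood of the origin: every summand is smooth there, and term-wise differentiation of any order produces a series dominated by $\sum_{\bn\ne 0}|\bn|^{-s-m}$, absolutely summable for $m$ large and reduced to a finite controlled remainder otherwise. This would establish the first bullet for $0<s<d$ up to an additive constant absorbed by the normalization $\hat{W}_s(0)=0$, and smoothness of $W_s$ away from $0$ follows by the same mechanism.

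To handle all $s<d$ uniformly, I would work with the entire distribution-valued family $s\mapsto |\bx|^{-s}/\Gamma(s/2)$, whose Euclidean Fourier transform $\pi^{d/2-s}|\xi|^{-(d-s)}/\Gamma((d-s)/2)$ is likewise entire in $s$. Dividing the Poisson identity through by $\Gamma(s/2)$ and interpreting it as an identity of tempered distributions in $\bx$ depending analytically on $s$, I would extend it by analytic continuation to the half-plane $\Re s<d$, modulo finitely many low Fourier modes contributing polynomial corrections. For $s<0$ the coefficients $|\bk|^{-d+s}$ decay rapidly, so $W_s$ is already smooth everywhere and the content of the identity reduces to a matching of constants.

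The exceptional values $s=-2n$, $n\in\mathbb{Z}_{\ge 0}$, are precisely where $\Gamma(s/2)$ has poles, and this is where the main obstacle lies. Setting $s=-2n+\delta$ and expanding
$$|\bx|^{-s} = |\bx|^{2n}\bigl(1-\delta\log|\bx|+O(\delta^2)\bigr), \qquad \frac{1}{\Gamma(s/2)} = \frac{(-1)^n}{n!}\cdot\frac{\delta}{2} + O(\delta^2),$$
one sees that the naive limit of $|\bx|^{-s}/\Gamma(s/2)$ at $\delta=0$ vanishes, while the next order produces a multiple of $|\bx|^{-s}\log|\bx|$ together with a multiple of $|\bx|^{-s}$ whose coefficient involves the digamma value $\psi(n+1)=\sum_{k=1}^n 1/k-\gamma$. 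I would then combine this with the entire factor $\pi^{d/2-s}/\Gamma((d-s)/2)$; after absorbing Euler's $\gamma$ into a global smooth correction, the displayed formula emerges. The rigorous residue bookkeeping — keeping track of the factor $1/2$ inside $\Gamma(s/2)$, the sign $(-1)^{n+1}$, and assembling the harmonic sum consistently with the Taylor expansion of $1/\Gamma$ at $-n$ — is the most delicate part of the argument and is where I would proceed with the greatest care.
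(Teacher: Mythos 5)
This lemma is cited from \cite[Lemma 9.1]{SW21GET}; the present paper does not supply its own proof, so your argument can only be assessed on its own terms.

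Your strategy --- identify the periodic Riesz potential with a Poisson-summed Euclidean Riesz kernel to read off the local singularity, then handle $s=-2n$ by analytic continuation at the poles of $\Gamma(s/2)$ --- is the standard route and is sound at the level of ideas. Two specific points need repair before it is a proof. First, the ``formal'' identity $\sum_{\bn} a_s|\bx-\bn|^{-s} = \sum_{\bk}|\bk|^{-(d-s)}e^{2\pi i\bk\cdot\bx}$ has \emph{both} sides divergent for $0\le s<d$: the lattice sum diverges because $\sum_{\bn\ne 0}|\bn|^{-s}$ does not converge for $s\le d$, and the $\bk=0$ mode on the right is infinite. Your remark that term-wise derivatives are ``dominated by $\sum_{\bn\ne 0}|\bn|^{-s-m}$'' does not address the $m=0$ divergence; the usual rigorous device is the Gamma-integral representation
\begin{equation*}
|\bk|^{s-d}=\frac{\pi^{(d-s)/2}}{\Gamma((d-s)/2)}\int_0^\infty t^{\frac{d-s}{2}-1}e^{-\pi|\bk|^2 t}\rd t
\end{equation*}
together with the Jacobi theta transformation to control $t\to 0$, which both renormalizes the $\bk=0$ mode consistently with $\hat W_s(0)=0$ and yields the local singularity structure. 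Second, there is a concrete arithmetic slip in the residue expansion. The residue of $\Gamma$ at $z=-n$ is $\tfrac{(-1)^n}{n!}$, so $\Gamma(-n+\epsilon)\sim \tfrac{(-1)^n}{n!\,\epsilon}$ and hence $\tfrac{1}{\Gamma(-n+\epsilon)}\sim (-1)^n n!\,\epsilon$; with $\epsilon=\delta/2$ this gives $\tfrac{1}{\Gamma(s/2)}=(-1)^n n!\cdot\tfrac{\delta}{2}+O(\delta^2)$, not $\tfrac{(-1)^n}{n!}\cdot\tfrac{\delta}{2}$ as you wrote --- your expression is off by a factor of $(n!)^2$. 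Since this residue is exactly where the coefficients $\tfrac{2(-1)^{n+1}}{n!}$ and $\tfrac{(-1)^n}{n!}$ in the statement originate (after combining with the pole of $\Gamma(s/2)$ inside $a_s$), tracking it correctly is essential. A small simplification worth noting: when $s=-2n$, $|\bx|^{-s}=|\bx|^{2n}$ is a polynomial, so the harmonic-sum term $\tfrac{(-1)^n}{n!}\sum_{k=1}^n\tfrac1k|\bx|^{-s}$ is itself smooth; the genuinely singular content of the second bullet is only the $|\bx|^{2n}\ln|\bx|$ term, and recognizing this shortens the bookkeeping considerably.
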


 We apply it to estimate a mollified version of $W_s$. Fix a radially decreasing mollifier $\phi$ supported on $B(0;1/2)$ with $\int_{\mathbb{R}^d} \phi\rd{\bx} = 1$, and define $\psi=\phi*\phi$ which is also a radially decreasing mollifier, supported on $B(0;1)$ with $\int_{\mathbb{R}^d} \psi\rd{\bx} = 1$. Define $\psi_\epsilon(\bx) = \frac{1}{\epsilon^d}\psi(\frac{\bx}{\epsilon})$ (which can be viewed as a function on $\mathbb{T}^d$, identified with $[-1/2,1/2)^d$, for small $\epsilon$), and 
\begin{equation}\label{Weps}
W^\epsilon_s = W_s*\psi_\epsilon\,.
\end{equation}

From now on, for simplicity, we will suppress the dependence of $W_s$ or $W_s^\epsilon$ on $s$.

Then we give the consequences of the singularity structure of $W$ on its mollification $W^\epsilon$. We will separate into the cases of power-law singularity $0<s<d$ and logarithmic singularity $s=0$. 
\begin{lemma}\label{lem_W}
Assume $0<s<d$. We have the following estimates for $W^\epsilon$ if $\epsilon$ is sufficiently small:
\begin{itemize}
    \item ({\bf W1}) $\max W^\epsilon \le C_1 a_s\epsilon^{-s}$.
    \item ({\bf W2}) $W^\epsilon(\bx) \le C_2 W(\bx)$ for any $|\bx|$ sufficiently small.
    \item ({\bf W3}) $W^\epsilon(\bx) \le W(\bx) + C_3\epsilon^2 a_s|\bx|^{-s-2}$ for any $|\bx|\ge 2\epsilon$ (identifying $\mathbb{T}^d$ with $[-1/2,1/2)^d$).
\end{itemize}
\end{lemma}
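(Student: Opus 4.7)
The plan is to exploit the decomposition $W_s(\bx) = a_s|\bx|^{-s} + R(\bx)$ supplied by Lemma \ref{lem_period}; since $0<s<d$ excludes the exceptional values $s=0,-2,\dots$, the remainder $R$ is smooth in a neighborhood of the origin, and $W_s$ itself is smooth on $\mathbb{T}^d\setminus\{0\}$. Consequently all three bounds for $W^\epsilon = W_s*\psi_\epsilon$ reduce to understanding the convolution $|\cdot|^{-s}*\psi_\epsilon$, with contributions from the uniformly bounded $R$ absorbed into the constants. I would establish (\textbf{W1}) and (\textbf{W3}) directly from this picture, then deduce (\textbf{W2}) by case analysis on the relative size of $|\bx|$ and $\epsilon$.

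For (\textbf{W1}), the key observation is that the nonnegative radial function $|\cdot|^{-s}*\psi_\epsilon$ attains its maximum on $\mathbb{T}^d$ at the origin. Either a symmetric-decreasing-rearrangement argument applied to the two radial factors, or the elementary chain
\[
    (|\cdot|^{-s}*\psi_\epsilon)(\bx) \le \|\psi_\epsilon\|_\infty \int_{B(\bx,\epsilon)} |\by|^{-s}\rd\by \le \|\psi_\epsilon\|_\infty \int_{B(0,\epsilon)} |\by|^{-s}\rd\by,
\]
makes this quantitative, and the rescaling $\by=\epsilon\bz$ then produces $\epsilon^{-s}\int|\bz|^{-s}\psi(\bz)\rd\bz$, a finite multiple of $\epsilon^{-s}$ since $s<d$ and $\psi$ is bounded with compact support.

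For (\textbf{W3}) with $|\bx|\ge 2\epsilon$, the ball $B(\bx,\epsilon)$ sits at distance $\ge|\bx|/2$ from the origin, where $W_s$ is smooth. A second-order Taylor expansion around $\bx$ yields
\[
    W^\epsilon(\bx)-W_s(\bx) = \int_{B(0,\epsilon)}\int_0^1 (1-t)\,\by^{\top}\nabla^2 W_s(\bx-t\by)\,\by \rd t\,\psi_\epsilon(\by)\rd\by,
\]
the first-order term vanishing by the radial symmetry of $\psi_\epsilon$. Differentiating the decomposition from Lemma \ref{lem_period} gives $|\nabla^2 W_s(\by)|\le Ca_s|\by|^{-s-2}$ near $0$, and since $|\bx-t\by|\ge|\bx|/2$ this transfers to $|\nabla^2 W_s(\bx-t\by)|\le Ca_s|\bx|^{-s-2}$; combined with $\int|\by|^2\psi_\epsilon(\by)\rd\by = C\epsilon^2$, this yields (\textbf{W3}).

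Finally, (\textbf{W2}) follows by comparing $|\bx|$ with $\epsilon$. When $|\bx|\ge 2\epsilon$, the remainder in (\textbf{W3}) is at most $C(\epsilon/|\bx|)^2 a_s|\bx|^{-s}\le \tfrac{C}{4}a_s|\bx|^{-s}$, which is absorbed into $W_s(\bx)$ via the lower bound $W_s(\bx)\ge \tfrac{a_s}{2}|\bx|^{-s}$ valid for $|\bx|$ small (Lemma \ref{lem_period}). When $|\bx|<2\epsilon$, we have $\epsilon^{-s}\le 2^s|\bx|^{-s}$, so (\textbf{W1}) gives $W^\epsilon(\bx)\le Ca_s\epsilon^{-s}\le C'a_s|\bx|^{-s}\le C''W_s(\bx)$. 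The main technical nuisance throughout is tracking the $a_s$ prefactor so that (\textbf{W3}) compares $\epsilon^2|\bx|^{-s-2}$ against $W_s(\bx)$ cleanly; the smooth remainder $R$ never causes trouble because its derivatives up to second order are uniformly bounded on the support of every integral that arises.
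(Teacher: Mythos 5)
Your proof is correct and follows essentially the same route as the paper: (\textbf{W1}) via the radial-decreasing structure of $|\cdot|^{-s}*\psi_\epsilon$ and rescaling, (\textbf{W3}) via a second-order Taylor expansion in which the linear term drops out by the symmetry of $\psi_\epsilon$ and the Hessian is controlled by $|\bx|^{-s-2}$ on $|\bx|\ge 2\epsilon$, and (\textbf{W2}) by combining (\textbf{W1}) on $|\bx|<2\epsilon$ with (\textbf{W3}) on $|\bx|\ge 2\epsilon$ together with $W_s(\bx)\sim a_s|\bx|^{-s}$ near the origin. The only cosmetic differences are that you offer an $L^\infty$--$L^1$ alternative to the rearrangement argument for (\textbf{W1}) and write out the integral form of the Taylor remainder explicitly, neither of which changes the substance.
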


Notice that $a_s>0$ for $0<s<d$.

\begin{proof}

To prove ({\bf W1}), we only need to treat the singularity $|\bx|^{-s}$ since smooth parts only contribute $O(1)$ to $\max W^\epsilon$. The maximum of $|\bx|^{-s}*\psi_\epsilon$ (on $\mathbb{R}^d$) is obtained at 0 since both $|\bx|^{-s}$ and $\psi_\epsilon$ are radially decreasing. Therefore
\begin{equation}\begin{split}
    \max (|\bx|^{-s}*\psi_\epsilon) = & |S^{d-1}|\int_0^\epsilon \psi_\epsilon(r)r^{-s+d-1}\rd{r} = |S^{d-1}|\int_0^\epsilon \epsilon^{-d}\psi(\frac{r}{\epsilon})r^{-s+d-1}\rd{r} \\
    = & \epsilon^{-s}|S^{d-1}|\int_0^1 \psi(r)r^{-s+d-1}\rd{r} \le C \epsilon^{-s}\,,
\end{split}\end{equation}
since the last integral is finite for $0<s<d$.

To see ({\bf W3}), notice that
\begin{equation}
    W^\epsilon(\bx)-W(\bx) = \frac{1}{\epsilon^d}\int_{B(\bx;\epsilon)} (W(\by)-W(\bx))\psi(\frac{\by-\bx}{\epsilon})\rd{\by}\,.
\end{equation}
By Taylor expansion,
\begin{equation}
    W(\by)-W(\bx) = (\by-\bx)\cdot \nabla W(\bx) + O(\epsilon^2\sup_{B(\bx;\epsilon)}|\nabla^2 W|),\quad \forall \by\in B(\bx;\epsilon)\,,
\end{equation}
and the linear term does not contribute to the previous integral. The singularity structure of $W$ implies that 
\begin{equation}
    \sup_{B(\bx;\epsilon)}|\nabla^2 W| \le C|\bx|^{-s-2},\quad \forall |\bx|\ge 2\epsilon\,.
\end{equation}
Then ({\bf W3}) follows.

To get ({\bf W2}), we apply ({\bf W1}) for $|\bx|<2\epsilon$ and ({\bf W3}) for $|\bx|\ge 2\epsilon$, utilizing the fact that $W(\bx) \sim a_s |\bx|^{-s}$ for $|\bx|$ small.

\end{proof}

Abusing notation, we will write the conclusion of Lemma \ref{lem_period} with $s=0$ as that $W(\bx)+a_0\ln |\bx|$ is smooth near 0, where $a_0=\pi^{d/2}\frac{2}{\Gamma(d/2)}>0$. 
\begin{lemma}\label{lem_Wlog}
Assume $s=0$. We have the following estimates for $W^\epsilon$ if $\epsilon$ is sufficiently small:
\begin{itemize}
    \item ({\bf W1}) $\max W^\epsilon \le -a_0\ln \epsilon + C_1$.
    \item ({\bf W2}) $W^\epsilon(\bx) \le W(\bx) + C_2a_0$ for any $|\bx|$ sufficiently small.
    \item ({\bf W3}) $W^\epsilon(\bx) \le W(\bx) + C_3\epsilon^2a_0|\bx|^{-2}$ for any $|\bx|\ge 2\epsilon$ (identifying $\mathbb{T}^d$ with $[-1/2,1/2)^d$).
\end{itemize}
\end{lemma}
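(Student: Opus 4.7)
The plan is to follow exactly the same three-step strategy as in Lemma \ref{lem_W}, simply replacing the power-law singularity $a_s|\bx|^{-s}$ by the logarithmic singularity $-a_0\ln|\bx|$ (plus a smooth remainder, by Lemma \ref{lem_period} with $n=0$). All of the smooth parts contribute only $O(1)$ terms that can be absorbed into constants, so I only need to track the effect of mollifying $-a_0\ln|\bx|$.

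For (\textbf{W1}), I use that $-\ln|\bx|$ and $\psi_\epsilon$ are both radially decreasing, so $(-\ln|\bx|)*\psi_\epsilon$ attains its maximum at the origin. A direct computation with the substitution $r=\epsilon u$ gives
\begin{equation*}
(-a_0\ln|\bx|)*\psi_\epsilon(0) = -a_0|S^{d-1}|\int_0^1 \psi(u)\bigl(\ln\epsilon + \ln u\bigr)u^{d-1}\rd u = -a_0\ln\epsilon + O(1),
\end{equation*}
where the $O(1)$ term uses $|S^{d-1}|\int_0^1 \psi(u)u^{d-1}\rd u = 1$ (from $\int_{\mathbb{R}^d}\psi = 1$) and the finiteness of $\int_0^1 \psi(u)|\ln u|u^{d-1}\rd u$.

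For (\textbf{W3}), I reuse verbatim the Taylor expansion argument from the proof of Lemma \ref{lem_W}: write $W^\epsilon(\bx)-W(\bx)$ as an integral of $W(\by)-W(\bx)$ against $\psi_\epsilon$, note that the linear term in the Taylor expansion integrates to zero because $\psi$ is even, and bound the remainder by $\epsilon^2 \sup_{B(\bx;\epsilon)}|\nabla^2 W|$. The only change is the singularity structure: differentiating $-a_0\ln|\bx|$ twice produces an $O(a_0|\bx|^{-2})$ bound, so for $|\bx|\ge 2\epsilon$ we obtain $W^\epsilon(\bx) \le W(\bx) + C_3\epsilon^2 a_0|\bx|^{-2}$ as claimed.

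Finally, for (\textbf{W2}), I split into $|\bx|<2\epsilon$ and $|\bx|\ge 2\epsilon$. In the latter regime, (\textbf{W3}) immediately gives $W^\epsilon(\bx)-W(\bx) \le C_3 a_0/4$. In the regime $|\bx|<2\epsilon$, (\textbf{W1}) gives $W^\epsilon(\bx) \le -a_0\ln\epsilon + C_1$, while Lemma \ref{lem_period} and the assumption that $|\bx|<2\epsilon$ give $W(\bx) \ge -a_0\ln|\bx| - C \ge -a_0\ln\epsilon - a_0\ln 2 - C$, so subtracting yields $W^\epsilon(\bx)-W(\bx) \le C_2 a_0$ for an appropriate constant. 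I do not anticipate a real obstacle here; the only point that requires care is bookkeeping the constant $a_0$ uniformly through all three estimates, which is automatic because $-a_0\ln|\bx|$ is the only singular contribution in Lemma \ref{lem_period}.
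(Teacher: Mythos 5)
Your proposal is correct and mirrors the paper's proof essentially step for step: the paper proves (\textbf{W1}) by the same radial-monotonicity/substitution computation at the origin, and for (\textbf{W2}) and (\textbf{W3}) it simply states that the arguments are identical to those in Lemma \ref{lem_W} — the Taylor-expansion bound with the vanishing linear term and the two-regime split, exactly as you carried out. The only cosmetic difference is that you wrote out explicitly the details the paper defers by reference; no gap.
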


\begin{proof}
To prove ({\bf W1}), we proceed similarly as Lemma \ref{lem_W}, and we only need to estimate
\begin{equation}\begin{split}
    \max ((-\ln|\bx|)*\psi_\epsilon) = & |S^{d-1}|\int_0^\epsilon \psi_\epsilon(r)r^{d-1}(-\ln r)\rd{r} \\
    = & |S^{d-1}|\int_0^\epsilon \epsilon^{-d}\psi(\frac{r}{\epsilon})r^{d-1}(-\ln r)\rd{r} \\
    = & |S^{d-1}|\int_0^1 \psi(r)r^{d-1}(-\ln (r\epsilon))\rd{r} \\
    = &  |S^{d-1}|\left(  (-\ln \epsilon)\int_0^1 \psi(r)r^{d-1}\rd{r} +\int_0^1 \psi(r)r^{d-1}(-\ln r)\rd{r} \right) \\
    = & (-\ln \epsilon)+ |S^{d-1}|\int_0^1 \psi(r)r^{d-1}(-\ln r)\rd{r} = (-\ln \epsilon) + C\,,
\end{split}\end{equation}
where the second last equality uses the fact that $\int_{\mathbb{R}^d} \psi(\bx)\rd{\bx}=1$.

The proof of ({\bf W3}) and ({\bf W2}) is identical to the counterpart in Lemma \ref{lem_W}.

\end{proof}

\section{Proof of Theorem \ref{thm1} when $0<s<d$}\label{sec_thm1s0d}

We follow the notations in the previous section.

{\bf STEP 1}: mollify $\rho_N$ and set up candidates for the mollification radius $\epsilon$.

 We aim to find a proper $N^{-1/(2s)}\le \epsilon\le N^{-\lambda}$ (where the small parameter $\lambda>0$ will be chosen at the end of the proof), and take $\rho=\phi_\epsilon*\rho_N$. Then, recalling $W^\epsilon$ from \eqref{Weps}, we have
\begin{equation}\begin{split}
    E[\rho] = & \frac{1}{2}\int_{\mathbb{T}^d} (W*\phi_\epsilon*\rho_N)\cdot (\phi_\epsilon*\rho_N)\rd{\bx} =  \frac{1}{2}\int_{\mathbb{T}^d} (W*\phi_\epsilon*\phi_\epsilon*\rho_N)\cdot \rho_N\rd{\bx}\\
    = & \frac{1}{2}\int_{\mathbb{T}^d} (W^\epsilon*\rho_N)\cdot \rho_N\rd{\bx} = E^\epsilon[\rho_N]\,,
\end{split}\end{equation}
where $E^\epsilon$ denotes the energy associated to $W^\epsilon$. Clearly 
$d_\infty(\rho,\rho_N) \le \frac{1}{2}\epsilon \le  N^{-\lambda}$ since the mollification by $\phi_\epsilon$ gives a transport plan from $\rho_N$ to $\rho$ with $d_\infty$ cost $\frac{1}{2}\epsilon$. Also, since $\epsilon\ge N^{-1/(2s)}$, we have
\begin{equation}\begin{split}
    E^\epsilon[\rho_N] - E_N^\epsilon(\vec{\bx}) = & \frac{1}{2N^2} \sum_{i,j} W^\epsilon(\bx_i-\bx_j) - \frac{1}{2N(N-1)} \sum_{i,j:\,i\ne j} W^\epsilon(\bx_i-\bx_j) \\
    = & \frac{1}{2N^2} \sum_i W^\epsilon(0) + (\frac{1}{2N^2}-\frac{1}{2N(N-1)})\sum_{i,j:\,i\ne j} W^\epsilon(\bx_i-\bx_j) \\
    \le & \frac{1}{2N}\max W^\epsilon + \frac{1}{2N^2(N-1)}\cdot N(N-1)\max |W^\epsilon| \\
     \le &  C N^{-1} \epsilon^{-s} \le C N^{-1/2}\,,
\end{split}\end{equation}
using ({\bf W1}).

Therefore, it suffices to prove that
\begin{equation}\label{toprove}
    E_N^\epsilon(\vec{\bx})  - E_N(\vec{\bx}) \le C N^{-\lambda}\max\{E_N(\vec{\bx}),1\}\,,
\end{equation}
for some $\epsilon$ satisfying 
$N^{-1/(2s)}\le\epsilon\le N^{-\lambda}
$.

Now we fix $\epsilon_0 = N^{-\lambda}$ and a constant $A\ge 4$ to be chosen independent of $N$. Define 
\begin{equation}
    \epsilon_k = \epsilon_0 A^{-k},\quad k=1,\dots,K\,,
\end{equation}
where $K$ is determined as the first time $\epsilon_K \le A N^{-1/(2s)}$, see Figure \ref{fig1} for illustration. It is clear that
\begin{equation}\label{Kest}
    \epsilon_K \ge N^{-1/(2s)},\quad K \ge (\frac{1}{2s}-\lambda)\frac{\ln N}{\ln A}-1\ge (\frac{1}{4s}-\lambda)\frac{\ln N}{\ln A}\,,
\end{equation}
for large $N$. Our goal is to choose $\epsilon$ as one of the $\epsilon_k$. We will also denote
\begin{equation}
\epsilon_{k+1/2} = \sqrt{\epsilon_k\epsilon_{k+1}} = \epsilon_0 A^{-(k+1/2)},\quad k=0,\dots,K\,.
\end{equation}

\begin{figure}[htp!]
\begin{center}
	\includegraphics[width=0.99\textwidth]{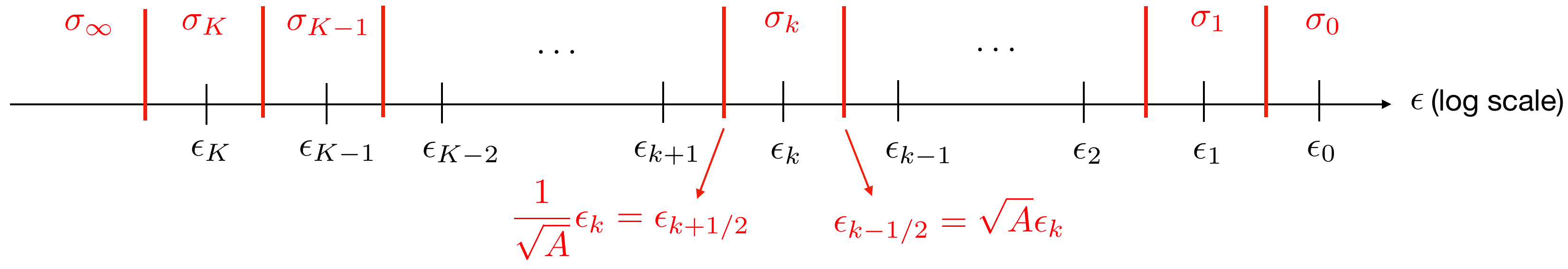}
	\caption{The choice of $\epsilon_k$ in logarithmic scale and their relation with $\sigma_k$. Each $\sigma_k$ (or $\sigma_{k,\epsilon}$) contains terms with $|\bx_i-\bx_j|$ in the range between the two neighboring red segments. One can see that the terms in $\sigma_{l,\epsilon_k}$ has $|\bx_i-\bx_j|\ge \sqrt{A}\epsilon_k$ if $l<k$ and $|\bx_i-\bx_j|\le \frac{1}{\sqrt{A}}\epsilon_k$ if $l>k$.}
	\label{fig1}
\end{center}	
\end{figure}

{\bf STEP 2}: estimate $E^{\epsilon_k}_N(\vec{\bx}) - E_N(\vec{\bx})$ for fixed $k=1,\dots,K$.

For given $\vec{\bx}$, define
\begin{equation}
    \sigma_{k,\epsilon} = \frac{1}{2N(N-1)}\sum_{\epsilon_{k+1/2} \le |\bx_i-\bx_j| < \epsilon_{k-1/2}} W^\epsilon(\bx_i-\bx_j),\quad k=1,\dots,K\,,
\end{equation}
(where the summation happens in a subset of $\{(i,j):1\le i, j \le N;\,i\ne j\}$) and
\begin{equation}\begin{split}
    \sigma_{0,\epsilon} = & \frac{1}{2N(N-1)}\sum_{|\bx_i-\bx_j| \ge \epsilon_{1/2}} W^\epsilon(\bx_i-\bx_j)\,,\\
     \sigma_{\infty,\epsilon} = & \frac{1}{2N(N-1)}\sum_{i\ne j,\,|\bx_i-\bx_j| < \epsilon_{K+1/2}} W^\epsilon(\bx_i-\bx_j)\,.
\end{split}\end{equation}
The counterparts with $W^\epsilon$ replaced by $W$ are denoted by $\sigma_k$. It is clear that for $N$ large, any $\epsilon\le \epsilon_0=N^{-\lambda}$ is small, and thus any $\sigma_{k,\epsilon},\sigma_k,\,k=1,\dots,K,\infty$ only contains positive terms in its summand due to $W(\bx)\sim a_s|\bx|^{-s}$ for small $|\bx|$.

Then, for each $k=1,\dots,K$, we can decompose $E^{\epsilon_k}_N(\vec{\bx})$ into
\begin{equation}
    E^{\epsilon_k}_N(\vec{\bx}) = \sigma_{0,\epsilon_k} + \sum_{l=1}^{k-1}\sigma_{l,\epsilon_k} + \sigma_{k,\epsilon_k} + \sum_{l=k+1}^{K}\sigma_{l,\epsilon_k} + \sigma_{\infty,\epsilon_k}\,,
\end{equation}
and similarly decompose
\begin{equation}
    E_N(\vec{\bx}) = \sigma_{0} + \sum_{l=1}^{k-1}\sigma_{l} + \sigma_{k} + \sum_{l=k+1}^{K}\sigma_{l} + \sigma_{\infty}\,.
\end{equation}
Now we estimate each part $\sigma_{l,\epsilon_k}$ separately by comparing with the counterparts $\sigma_l$. For different values of $l$, we will apply different estimates in Lemma \ref{lem_W} according to the relative size between $\epsilon_k$ and $|\bx_i-\bx_j|$.

{\bf Case $l=0$} (positive contribution): the terms in $\sigma_{0,\epsilon_k}$ have $|\bx_i-\bx_j| \ge \epsilon_{1/2} \ge \sqrt{A}\epsilon_k \ge 2\epsilon_k$ since $k\ge 1$ and $A\ge 4$. Therefore we may apply ({\bf W3}) and use $|\bx_i-\bx_j| \ge \epsilon_{1/2} = A^{(k-1/2)}\epsilon_k$ to get
\begin{equation}\begin{split}
    \sigma_{0,\epsilon_k} - \sigma_0 \le &  C\epsilon_k^2 \frac{1}{2N(N-1)}\sum_{|\bx_i-\bx_j| \ge \epsilon_{1/2}} |\bx_i-\bx_j|^{-s-2} \\
    \le &  CA^{-2(k-1/2)} \frac{1}{2N(N-1)}\sum_{|\bx_i-\bx_j| \ge \epsilon_{1/2}} |\bx_i-\bx_j|^{-s} \\
    \le &  CA^{-2(k-1/2)} \frac{1}{2N(N-1)}\sum_{|\bx_i-\bx_j| \ge \epsilon_{1/2}} (W(\bx_i-\bx_j)+C) \\
    \le &  \cC_EA^{-2(k-1/2)}\,,  \\
\end{split}\end{equation}
where we denote
\begin{equation}\label{CE}
\cC_E = C\max\{E_N(\vec{\bx}),1\}\,,
\end{equation}
with suitable constant $C$. Here the third inequality uses $|\bx|^{-s} \le C(W(\bx)+C)$ due to the fact that $W(\bx) \sim a_s|\bx|^{-s}$ for small $|\bx|$ and smooth elsewhere.

{\bf Case $l=1,\dots,k-1$} (positive contribution): the terms in $\sigma_{l,\epsilon_k}$ have $|\bx_i-\bx_j| \ge \epsilon_{l+1/2} \ge \sqrt{A}\epsilon_k \ge 2\epsilon_k$. Therefore we may apply ({\bf W3}) and use $|\bx_i-\bx_j| \ge \epsilon_{l+1/2} = A^{(k-l-1/2)}\epsilon_k$ to get
\begin{equation}\begin{split}
    \sigma_{l,\epsilon_k} - \sigma_l \le &  C_3\epsilon_k^2 \frac{1}{2N(N-1)}\sum_{\epsilon_{l+1/2} \le |\bx_i-\bx_j| < \epsilon_{l-1/2} } a_s|\bx_i-\bx_j|^{-s-2} \\
    \le &  C_3A^{-2(k-l-1/2)} \frac{1}{2N(N-1)}\sum_{\epsilon_{l+1/2} \le |\bx_i-\bx_j| < \epsilon_{l-1/2} } a_s|\bx_i-\bx_j|^{-s} \\
    \le &  2C_3A^{-2(k-l-1/2)} \frac{1}{2N(N-1)}\sum_{\epsilon_{l+1/2} \le |\bx_i-\bx_j| < \epsilon_{l-1/2} } W(\bx_i-\bx_j) \\
    \le &  2C_3A^{-2(k-l-1/2)} \sigma_l \,,\\
\end{split}\end{equation}
where we used that $W(\bx) \sim a_s|\bx|^{-s}$ for small $|\bx|$ in the third inequality.

{\bf Case $l=k$} (positive contribution): we have $|\bx_i-\bx_j| \le \epsilon_{k-1/2} <\epsilon_0$ since $k\ge 1$, and thus $|\bx_i-\bx_j|$ is small. Therefore we may apply ({\bf W2}) to get
\begin{equation}
    \sigma_{k,\epsilon_k}-\sigma_k \le C_2 \sigma_k-\sigma_k = (C_2-1)\sigma_k\,.
\end{equation}

{\bf Case $l=k+1,\dots,K$} (negative contribution): we may apply ({\bf W1}) and use $|\bx_i-\bx_j| \le \epsilon_{l-1/2} = A^{-(l-k-1/2)}\epsilon_k$ to get
\begin{equation}\begin{split}
    \sigma_{l,\epsilon_k} - \sigma_l \le &  \frac{1}{2N(N-1)}\sum_{\epsilon_{l+1/2} \le |\bx_i-\bx_j| < \epsilon_{l-1/2} } (C_1a_s\epsilon_k^{-s} - W(\bx_i-\bx_j)) \\
    \le &  \frac{1}{2N(N-1)}\sum_{\epsilon_{l+1/2} \le |\bx_i-\bx_j| < \epsilon_{l-1/2} } (C_1a_sA^{-s(l-k-1/2)}|\bx_i-\bx_j|^{-s} - W(\bx_i-\bx_j)) \\
    \le &  \frac{1}{2N(N-1)}\sum_{\epsilon_{l+1/2} \le |\bx_i-\bx_j| < \epsilon_{l-1/2} } (2C_1 A^{-s(l-k-1/2)} W(\bx_i-\bx_j) - W(\bx_i-\bx_j)) \\
    = & -(1-2C_1 A^{-s(l-k-1/2)})\sigma_l\,,
\end{split}\end{equation}
where we used that $W(\bx) \sim a_s|\bx|^{-s}$ for small $|\bx|$ in the third inequality. Notice that the coefficient $1-2C_1 A^{-s(l-k-1/2)} \ge 1-2C_1 A^{-s/2}\ge \frac{1}{2}$ if $A$ satisfies
\begin{equation}
A \ge (4C_1)^{2/s}\,.
\end{equation} 
With this condition, we have 
\begin{equation}
 \sigma_{l,\epsilon_k} - \sigma_l \le -\frac{1}{2}\sigma_l \le 0\,.
\end{equation}

{\bf Case $l=\infty$} (negative contribution): we may proceed similarly as the previous case to get
\begin{equation}
    \sigma_{\infty,\epsilon_k} - \sigma_\infty \le 0\,.
\end{equation}

We sum up these estimates and get
\begin{equation}\label{E1}\begin{split}
    E^{\epsilon_k}_N(\vec{\bx}) - E_N(\vec{\bx})
    \le & \cC_EA^{-2(k-1/2)}  + 2C_3\sum_{l=1}^{k-1} A^{-2(k-l-1/2)} \sigma_l + (C_2-1) \sigma_k - \frac{1}{2}\sum_{l=k+1}^K\sigma_l\,,\\
\end{split}\end{equation}
for any $k=1,\dots,K$.

{\bf STEP 3}: choose $\epsilon$ to be some $\epsilon_k$ based on \eqref{E1}.

If there exists some $k=1,\dots,K-1$ such that the RHS is negative, then we already get \eqref{toprove} with $\epsilon=\epsilon_k$. Otherwise, recalling that $\sigma_1,\dots,\sigma_K\ge 0$ and the inequality $\sum_{k=1}^K\sigma_k\le \cC_E$ by the definition of $\sigma_k$, we may apply Lemma \ref{lem_ind} below with $\alpha=\frac{1}{2}$, $\cC_3=2C_3>0$, $\cC_2=C_2-1>0$ to get $\sigma_K \le 8\cC_E\beta^K$ where $\beta=\frac{C_2-3/4}{C_2-1/2}$, as long as $A$ satisfies
\begin{equation}
A \ge \max\{4,64C_3\}\,.
\end{equation}
Taking $\epsilon=\epsilon_K$, we get from \eqref{E1} that
\begin{equation}\begin{split}
    E^{\epsilon_K}_N(\vec{\bx}) - E_N(\vec{\bx}) 
    \le & \cC_EA^{-2K+1}  + 2C_3\sum_{l=1}^{K-1} A^{-2(K-l)+1} \cdot 8\cC_E\beta^l + (C_2-1)\cdot 8\cC_E \beta^K \\
    \le &  C \cC_E \beta^K\,, \\
\end{split}\end{equation}
using $A\ge 4$ and $\frac{1}{2}<\beta<1$. Notice that \eqref{Kest} implies
\begin{equation}
    \ln(\beta^K) = K\ln\beta \le (\frac{1}{4s}-\lambda)\frac{\ln \beta}{\ln A} \ln N\,,
\end{equation}
and then we see that
\begin{equation}\label{Npower}
    \beta^K \le N^{(\frac{1}{4s}-\lambda)\frac{\ln \beta}{\ln A}}\,.
\end{equation}
Taking $\lambda>0$ so that $-\lambda=(\frac{1}{4s}-\lambda)\frac{\ln \beta}{\ln A}$ (noticing $-1<\frac{\ln \beta}{\ln A}<0$), we get the conclusion.

\begin{lemma}\label{lem_ind}
Let $\alpha>0$. Assume $\sigma_1,\dots,\sigma_K\ge 0$ satisfy $\sum_{k=1}^K\sigma_k\le \cC_E$ and
\begin{equation}\begin{split}
    \cC_EA^{-2(k-1/2)}  + \cC_3\sum_{l=1}^{k-1} A^{-2(k-l-1/2)} \sigma_l + \cC_2 \sigma_k - \alpha \sum_{l=k+1}^K\sigma_l \ge 0\,,
\end{split}\end{equation}
for any $k=1,\dots,K-1$, where $\cC_E,\cC_3,\cC_2,A>0$ are given constants. If 
\begin{equation}\label{A}
A \ge \max\{4,\frac{16\cC_3}{\alpha}\}\,,
\end{equation}
then we have the estimate
\begin{equation}\label{lem_ind_1}
    \sigma_k \le \frac{4}{\alpha}\cC_E \beta^k,\quad k=1,\dots,K\,,
\end{equation}
where $\beta=\frac{\cC_2+\frac{\alpha}{2}}{\cC_2+\alpha}<1$.
\end{lemma}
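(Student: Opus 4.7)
The proof proceeds by strong induction, but the key is to induct on the tail sum $T_k := \sum_{l=k+1}^K \sigma_l$ rather than on $\sigma_k$ directly. Using $\sigma_k = T_{k-1} - T_k$ in the hypothesis rearranges it into the contractive form
\begin{equation*}
T_k \le \mu T_{k-1} + \frac{\cC_E A^{-(2k-1)}}{\cC_2+\alpha} + \frac{\cC_3}{\cC_2+\alpha}\sum_{l=1}^{k-1} A^{-2(k-l)+1}\sigma_l, \qquad \mu := \frac{\cC_2}{\cC_2+\alpha},
\end{equation*}
valid for $k = 1,\dots,K-1$. Here $\mu$ is strictly less than $\beta$, with gap $\beta - \mu = \frac{\alpha/2}{\cC_2+\alpha}$ providing the crucial slack. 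Note also that $\beta \in (1/2,1)$ since $\alpha,\cC_2>0$.

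\textbf{Induction.} I would prove that $T_k \le C^*\beta^k$ for a constant $C^* = O(\cC_E/\alpha)$, with base case $T_0 \le \cC_E$. Given the bound for all $j<k$, the relation $\sigma_l \le T_{l-1} \le C^*\beta^{l-1}$ converts the kernel term into a geometric sum,
\begin{equation*}
\sum_{l=1}^{k-1} A^{-2(k-l)+1}\sigma_l \le C^* A\beta^{k-1}\sum_{j=1}^{\infty}(A^2\beta)^{-j} \le \frac{2C^*\beta^k}{A\beta^2},
\end{equation*}
where the geometric series converges because $\beta>1/2$ and $A\ge 4$ force $A^2\beta \ge 2$. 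The source $\cC_E A^{-(2k-1)}$ decays at rate $A^{-2}$, hence faster than $\beta^k$, so it contributes at most a multiple of $\cC_E\beta^k/(A\beta)$. Substituting these two estimates gives
\begin{equation*}
\frac{T_k}{\beta^k} \le \frac{\mu}{\beta}C^* + \frac{2\cC_3 C^*}{A\beta^2(\cC_2+\alpha)} + \frac{\cC_E}{(\cC_2+\alpha)A\beta},
\end{equation*}
and closing the induction requires this to be $\le C^*$. The available margin is $1 - \mu/\beta = \frac{\alpha}{2\cC_2+\alpha}$, and the hypothesis $A \ge 16\cC_3/\alpha$ is precisely what makes the $\cC_3$-term at most half of that margin (modulo the harmless factor $\beta^{-2} \le 4$); the remaining half absorbs the source once $C^* \gtrsim \cC_E/\alpha$. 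Finally, the desired conclusion $\sigma_k \le \tfrac{4}{\alpha}\cC_E\beta^k$ (for all $k=1,\dots,K$, including $k=K$) follows from $\sigma_k = T_{k-1}-T_k \le T_{k-1} \le C^*\beta^{k-1}$ after adjusting constants so that $C^*/\beta \le 4\cC_E/\alpha$.

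\textbf{Main obstacle.} The principal difficulty is the triangular feedback: the bound at level $k$ depends on all earlier $\sigma_l$, so one cannot simply iterate a two-term contraction. This is why one must induct on $T_k$ with an exponential ansatz and show that the kernel decay $A^{-2(k-l)+1}$ is fast enough to render the feedback summable and dominated by the $\beta-\mu$ slack. Beyond this, the proof is bookkeeping: tracking the explicit constant $4/\alpha$ and verifying that the two stated conditions $A \ge 4$ and $A \ge 16\cC_3/\alpha$ suffice to close the induction.
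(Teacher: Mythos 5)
Your proof is correct and follows essentially the same approach as the paper's: define the tail sums (your $T_k$ is the paper's $S_{k+1}$), Abel-summate the hypothesis into a recursion $T_k \le \mu T_{k-1} + (\text{source}) + (\text{kernel term})$ with $\mu = \cC_2/(\cC_2+\alpha)$, and close an inductive geometric ansatz $T_k \le C^*\beta^k$ by using the gap $\beta-\mu$ together with $A\ge\max\{4,16\cC_3/\alpha\}$ to dominate the kernel and source contributions. The only cosmetic difference is that you keep $\sigma_l$ (bounded by $T_{l-1}$) inside the kernel sum rather than replacing it with $S_l$ at the outset, which just shifts the geometric ratio from $A\beta$ to $A^2\beta$; both converge under the stated hypotheses.
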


\begin{proof}
Define $S_k = \sum_{l=k}^K\sigma_l \ge 0 $. Then the assumption gives
\begin{equation}\begin{split}
    \alpha S_{k+1} \le & \cC_EA^{-2k+1}  + \cC_3\sum_{l=1}^{k-1} A^{-2(k-l)+1} \sigma_l +\cC_2 \sigma_k \\
    = & \cC_EA^{-2k+1}  + \cC_3\sum_{l=1}^{k-1} A^{-2(k-l)+1} (S_l-S_{l+1}) + \cC_2 (S_k-S_{k+1}) \\
    \le & \cC_EA^{-2k+1}  + \cC_3\sum_{l=1}^{k-1} A^{-2(k-l)+1} S_l + \cC_2 (S_k-S_{k+1}) \,,\\
\end{split}\end{equation}
for any $k=1,\dots,K-1$, i.e.,
\begin{equation}\label{sk1}\begin{split}
    S_{k+1} 
    \le & \frac{\cC_E}{\cC_2+\alpha}A^{-2k+1}  + \frac{\cC_3}{\cC_2+\alpha}\sum_{l=1}^{k-1} A^{-2(k-l)+1} S_l + \frac{\cC_2}{\cC_2+\alpha} S_k\,. \\
\end{split}\end{equation}

Then, for sufficiently large $A$, we prove
\begin{equation}\label{indk}
    S_k \le C_4 \cC_E \beta^k,\quad k = 1,\dots,K\,,
\end{equation}
by induction, where $C_4>2$ and $1/2<\beta<1$ are to be determined. The case $k=1$ is true since $C_4\beta \ge 1$ and $S_1 =\sum_{k=1}^K\sigma_k\le \cC_E$. Assuming \eqref{indk} is true for $1,\dots,k$. Then \eqref{sk1} gives
\begin{equation}\label{S3terms}\begin{split}
    \frac{S_{k+1}}{C_4\cC_E} 
    \le & \frac{1}{(\cC_2+\alpha)C_4}A^{-2k+1}  + \frac{\cC_3}{\cC_2+\alpha}\sum_{l=1}^{k-1} A^{-2(k-l)+1} \beta^l + \frac{\cC_2}{\cC_2+\alpha} \beta^k \\
    \le & \frac{1}{(\cC_2+\alpha)C_4}A^{-k}  + \frac{\cC_3}{\cC_2+\alpha}\sum_{l=1}^{k-1} A^{-(k-l)} \beta^l + \frac{\cC_2}{\cC_2+\alpha} \beta^k \\
    \le & \frac{1}{(\cC_2+\alpha)C_4}\beta^k  + \frac{\cC_3}{\cC_2+\alpha}\beta^k \sum_{l=1}^{k-1} (A\beta)^{-(k-l)}  + \frac{\cC_2}{\cC_2+\alpha} \beta^k \\
    \le & \beta^k\Big(\frac{1}{(\cC_2+\alpha)C_4}  + \frac{\cC_3}{\cC_2+\alpha} (A\beta)^{-1}\frac{1}{1-(A\beta)^{-1}}  + \frac{\cC_2}{\cC_2+\alpha} \Big)\,, \\
\end{split}\end{equation}
where we used the fact that $A^{-1} < \beta$ (since $A\ge 4$ by \eqref{A} and $\beta>1/2$). We choose $\beta=\frac{\cC_2+\frac{\alpha}{2}}{\cC_2+\alpha}$ which satisfies $1/2<\beta<1$. Then, by taking 
\begin{equation}
C_4=\frac{4}{\alpha}\,,
\end{equation}
and requiring \eqref{A}, the first two terms in the parenthesis can both be made smaller than $\frac{\alpha/4}{\cC_2+\alpha}$, and thus the parenthesis is smaller than $\beta$. This finishes the induction step.

Finally, notice that $\eqref{indk}$ implies the conclusion.

\end{proof}

\begin{remark}[Dependence of $\lambda$ on other parameters]\label{rem_lambda}
The parameter $\lambda$ in the proof of Theorem \ref{thm1} was determined by the following procedure:
\begin{itemize}
\item Obtain $C_1,C_2,C_3$ in Lemma \ref{lem_W} which depend on $d$ and $s$.
\item Determine $A$ by $A=\max\{4, 64C_3,(4C_1)^{2/s}\}$.
\item Determine $\lambda$ by the relation $-\lambda=(\frac{1}{4s}-\lambda)\frac{\ln \beta}{\ln A}$, where $\beta=\frac{C_2-3/4}{C_2-1/2}<1$.
\end{itemize}
As a consequence, $\lambda$ depends on $d$ and $s$ and can be calculated explicitly.
\end{remark}

\section{Proof of Theorem \ref{thm1} when $s=0$ (logarithmic potential)}\label{sec_thm1s0}

In this section we prove Theorem \ref{thm1} when $s=0$, which is formulated as a stronger version as follows:
\begin{theorem}\label{thm1log}
When $s=0$, Theorem \ref{thm1} holds with 
\begin{equation}
    d_\infty(\rho,\rho_N[\vec{\bx}]) \le N^{-1}\,,
\end{equation}
and 
\begin{equation}
    E[\rho] - E_N(\vec{\bx}) \le C N^{-1}\ln^2 N\max\{E_N(\vec{\bx}),1\}\,.
\end{equation}
\end{theorem}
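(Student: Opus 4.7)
The plan is to follow the mollification strategy of Section \ref{sec_thm1s0d}, using Lemma \ref{lem_Wlog} in place of Lemma \ref{lem_W}. Fix $\epsilon_0 = N^{-1}/2$ and consider dyadic candidates $\epsilon_k = \epsilon_0 A^{-k}$ for $k = 0, 1, \ldots, K$, with $A\ge 4$ a constant to be chosen and $K$ of order $\ln N$ to be fixed. Taking $\rho = \phi_{\epsilon_k} * \rho_N$, we obtain $d_\infty(\rho,\rho_N)\le \epsilon_k/2 \le N^{-1}$ and $E[\rho] = E^{\epsilon_k}[\rho_N]$ as before. By (\textbf{W1}) of Lemma \ref{lem_Wlog}, the diagonal correction satisfies
\[
E^{\epsilon_k}[\rho_N] - E_N^{\epsilon_k}(\vec{\bx}) \le \frac{1}{2N}\max W^{\epsilon_k} + O(N^{-2})\max |W^{\epsilon_k}| \le C N^{-1}\ln(1/\epsilon_K) \le C N^{-1}\ln N,
\]
so it suffices to bound $E_N^{\epsilon_k}(\vec{\bx}) - E_N(\vec{\bx}) \le C N^{-1}\ln^2 N\,\max\{E_N(\vec{\bx}), 1\}$ for some $k\in\{1,\dots,K\}$.

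Next, decompose $E_N^{\epsilon_k}-E_N$ into shell sums $\sigma_{l,\epsilon_k}-\sigma_l$ exactly as in Step 2 of Section \ref{sec_thm1s0d}, and estimate each case by applying Lemma \ref{lem_Wlog}. For $l<k$, (\textbf{W3}) yields geometric decay $\sigma_{l,\epsilon_k}-\sigma_l \le C_3 a_0 A^{-2(k-l-1/2)}\sigma_l$, after using the lower bound $W(\bx) \ge -a_0\ln|\bx|-C$ on small balls to convert the pair count $n_l/(2N(N-1))$ into a bound by $\sigma_l$ (absorbing the resulting $1/\ln N$ factor into the constant). For $l=k$, (\textbf{W2}) gives $\sigma_{k,\epsilon_k}-\sigma_k \le C_2 a_0 n_k/(2N(N-1)) \le (C_2/\ln N)\sigma_k$; the key difference from the power-law case is that the coefficient is $O(1/\ln N)$ rather than $O(1)$. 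For $l>k$, combining (\textbf{W1}) with the pointwise lower bound on $W$ gives $W^{\epsilon_k}(\bx)-W(\bx) \le -a_0(l-k-1/2)\ln A + C_1+C$ on shell $l$, which is strongly negative for $A$ large enough that $a_0\ln A \ge 4(C_1+C)$; translating to $\sigma_l$ yields $\sigma_{l,\epsilon_k}-\sigma_l \le -(c(l-k)/\ln N)\sigma_l$, again with coefficient $O(1/\ln N)$.

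Assembling these bounds yields an analog of \eqref{E1}, and I would then invoke an induction lemma patterned on Lemma \ref{lem_ind}. Since both the middle coefficient $\cC_2$ and the negative coefficient $\alpha$ shrink at the common rate $1/\ln N$, the contraction ratio $\beta = (\cC_2+\alpha/2)/(\cC_2+\alpha)$ remains a \emph{fixed} constant in $(1/2,1)$. However, the prefactor $4/\alpha$ in the conclusion \eqref{lem_ind_1} becomes $O(\ln N)$, so $\sigma_K \le O(\ln N)\,\cC_E\,\beta^K$. Choosing the least $K \sim \ln N$ with $\beta^K \le \ln N/N$, the summation for $k=K$ (following the derivation after \eqref{E1}) gives $E_N^{\epsilon_K} - E_N \le C\cC_E\ln N\cdot\beta^K \le C N^{-1}\ln^2 N\,\max\{E_N,1\}$. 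Combined with the $O(N^{-1}\ln N)$ diagonal correction, this yields the claim.

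The hard part will be controlling the $1/\ln N$-degradation of the inductive constants. The saving grace is that $\cC_2$ and $\alpha$ shrink at the same rate, so $\beta$ does not degenerate to $1$; but the prefactor $4/\alpha = O(\ln N)$, combined with the $K\sim \ln N$ iterations needed to reach $\beta^K \approx \ln N/N$, is precisely what produces the $\ln^2 N$ factor in the final bound. A secondary point is calibrating $A$ carefully: large enough that the $l>k$ cancellation beats the $l=k$ middle term (so that the analog of condition \eqref{A} holds after the matching $\ln N$ factors cancel), but not so large that $K\ln A$ drives $\ln(1/\epsilon_K)$ above $O(\ln N)$ and inflates the diagonal correction.
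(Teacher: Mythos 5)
Your choice of a fixed dyadic ratio $A$ (rather than the paper's $A=N$) is a genuine departure from the paper's proof, and the diagnosis that the shell estimates pick up $1/\ln N$ factors is broadly right. But the claim that the middle coefficient $\cC_2$ and the cancellation coefficient $\alpha$ are \emph{uniformly} $O(1/\ln N)$, so that $\beta$ is a fixed constant, is where the argument has a gap. Translating the shell-$l$ contributions into bounds by $\sigma_l$ requires dividing by the typical size of $W$ on that shell, which is $\approx a_0\bigl(-\ln\epsilon_{l\pm 1/2}\bigr)=a_0\bigl(\ln(1/\epsilon_0)+(l\mp\tfrac12)\ln A\bigr)$. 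With $A$ fixed and $l$ ranging up to $K\sim\ln N$, this depth runs from $\sim\ln N$ at small $l$ to $\sim\ln N+K\ln A$ at $l=K$. The weakest cancellation, at $(l,k)=(K,K-1)$, therefore gives $\alpha\sim\frac{\ln A}{\ln N + K\ln A}$, while the largest middle coefficient, near $k=1$, gives $\cC_2\sim\frac{1}{\ln N}$. The ratio $\alpha/\cC_2\sim\frac{\ln N\,\ln A}{\ln N+K\ln A}$ is \emph{not} a fixed constant: it degrades as $K$ grows, so $\beta=\frac{\cC_2+\alpha/2}{\cC_2+\alpha}\to 1$. Meanwhile you need $K$ large enough that $\beta^K\lesssim 1/N$. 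This is a circularity, and whether a fixed point with $K=O(\ln N)$ (i.e.\ $\lambda=1$) exists depends on the sizes of the constants $C_1,C_2$ from Lemma~\ref{lem_Wlog}, which you do not control. A related inconsistency: ``absorbing the $1/\ln N$ factor into the constant'' for $\cC_3$ would make $\cC_3/\alpha\sim\ln N$ and break the analog of condition~\eqref{A}, forcing $A\gtrsim\ln N$; that factor has to be retained so that $\cC_3/\alpha$ stays bounded.

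The device that removes the circularity in the paper is the reweighting $\sigma_l\mapsto\sigma_l/l$ applied \emph{before} Lemma~\ref{lem_ind}: with $A=N$ the depth $-\ln\epsilon_l=(l+1)\ln N$ scales linearly in $l$, so dividing by $l$ makes all shell coefficients honest constants ($\alpha=1/16$, $\cC_2=C_2$, $\cC_3=2C_3$), independent of $K$. Your fixed-$A$ approach can be repaired by the analogous reweighting $\tilde\sigma_l:=\sigma_l/(-\ln\epsilon_l)\approx\sigma_l/(\ln N+l\ln A)$; then each shell coefficient becomes a constant depending only on $A$, $\sum\tilde\sigma_l\le\sum\sigma_l\le\cC_E$ still holds, and Lemma~\ref{lem_ind} applies with a genuinely fixed $\beta$. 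Taking $K\sim\ln N/(-\ln\beta)$ then gives $-\ln\epsilon_K\sim\ln N$ rather than the paper's $\sim\ln^2 N$, so this route actually yields the slightly sharper bound $E[\rho]-E_N(\vec\bx)\le CN^{-1}\ln N\max\{E_N(\vec\bx),1\}$; the $\ln^2 N$ in the paper's statement comes solely from the diagonal correction $N^{-1}(-\ln\epsilon_K)$ under the choice $A=N$, not from the off-diagonal induction. Without the reweighting (or an equivalent $l$-dependent version of Lemma~\ref{lem_ind}), however, the argument as you wrote it does not close.
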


\begin{proof}
The proof follows the same strategy as the previous section, with Lemma \ref{lem_Wlog} used instead of Lemma \ref{lem_W}. We first notice that by taking the same $\rho$ as before, if we take $\epsilon$ sufficiently small with
\begin{equation}
\epsilon \ge N^{-M\ln N}\,,
\end{equation}
(where $M$ is a fixed large number to be chosen) then we have 
\begin{equation}\begin{split}
    E^\epsilon[\rho_N] - E_N^\epsilon(\vec{\bx}) 
    \le & \frac{1}{2N}\max W^\epsilon + \frac{1}{2N^2(N-1)}\cdot N(N-1)\max |W^\epsilon| \\
     \le &  C N^{-1} (-\ln \epsilon) \le C M N^{-1}\ln^2 N\,,
\end{split}\end{equation}
using ({\bf W1}).

We choose $\epsilon_k$ similar as before, with (we have implicitly assumed that $N$ is sufficiently large)
\begin{equation}
\epsilon_0=N^{-1},\quad A=N\,,
\end{equation}
(i.e., $\epsilon_k=N^{-(k+1)}$) and $K$ is determined as the first time $\epsilon_K \le AN^{-M\ln N}$. Notice here that $A$ depends on $N$, which is different from the previous section. Then we have $\epsilon_K\ge N^{-M\ln N}$ and
\begin{equation}\label{Kestlog}
K \ge (M\ln N-1)\frac{\ln N}{\ln A}-1=M\ln N-2\,.
\end{equation}
Then, for each $k=1,\dots,K$, we can decompose $E^{\epsilon_k}_N(\vec{\bx})$ into
\begin{equation}
    E^{\epsilon_k}_N(\vec{\bx}) = \sigma_{0,\epsilon_k} + \sum_{l=1}^{k-1}\sigma_{l,\epsilon_k} + \sigma_{k,\epsilon_k} + \sum_{l=k+1}^{K}\sigma_{l,\epsilon_k} + \sigma_{\infty,\epsilon_k}\,,
\end{equation}
as before. Now we estimate each part separately by comparing with the counterparts with the potential $W$.

{\bf Case $l=0$}: 
apply ({\bf W3}) to get
\begin{equation}\begin{split}
    \sigma_{0,\epsilon_k} - \sigma_0 \le &  C\epsilon_k^2 \frac{1}{2N(N-1)}\sum_{|\bx_i-\bx_j| \ge \epsilon_{1/2}} |\bx_i-\bx_j|^{-2} \\
    \le &  CA^{-2(k-1/2)}  \frac{1}{2N(N-1)}\sum_{|\bx_i-\bx_j| \ge \epsilon_{1/2}} 1 \\
   \le &  CA^{-2(k-1/2)}  \frac{1}{2N(N-1)}\sum_{|\bx_i-\bx_j| \ge \epsilon_{1/2}} (W(\bx_i-\bx_j)+C) \\
    \le &  \cC_E A^{-2(k-1/2)} \,,
\end{split}\end{equation}
where $\cC_E$ is given by \eqref{CE}.

{\bf Case $l=1,\dots,k-1$}: 
apply ({\bf W3}) 
to get
\begin{equation}\begin{split}
    \sigma_{l,\epsilon_k} - \sigma_l \le &  C_3\epsilon_k^2 \frac{1}{2N(N-1)}\sum_{\epsilon_{l+1/2} \le |\bx_i-\bx_j| < \epsilon_{l-1/2} } a_0|\bx_i-\bx_j|^{-2} \\
    \le &  C_3A^{-2(k-l-1/2)} \frac{1}{2N(N-1)}\sum_{\epsilon_{l+1/2} \le |\bx_i-\bx_j| < \epsilon_{l-1/2} } a_0 \\
    \le &  2C_3A^{-2(k-l-1/2)} \frac{1}{2N(N-1)}\sum_{\epsilon_{l+1/2} \le |\bx_i-\bx_j| < \epsilon_{l-1/2} } W(\bx_i-\bx_j)\cdot \frac{1}{-\ln \epsilon_{l-1/2}} \\
    \le &  2C_3A^{-2(k-l-1/2)} \frac{1}{l}\sigma_l \,,\\
\end{split}\end{equation}
where we used that $W(\bx) \sim -a_0\ln|\bx|$ for small $|\bx|$, and $-\ln \epsilon_{l-1/2} = (l+\frac{1}{2})\ln N \ge l$.

{\bf Case $l=k$}: apply ({\bf W2}) to get
\begin{equation}\begin{split}
    \sigma_{k,\epsilon_k} \le & \frac{1}{2N(N-1)}\sum_{\epsilon_{k+1/2} \le |\bx_i-\bx_j| < \epsilon_{k-1/2} } (W(\bx_i-\bx_j)+C_2a_0) \\
    = & \sigma_k + C_2\frac{1}{2N(N-1)}\sum_{\epsilon_{k+1/2} \le |\bx_i-\bx_j| < \epsilon_{k-1/2} } a_0\\
    \le & \sigma_k + C_2\frac{1}{2N(N-1)}\sum_{\epsilon_{k+1/2} \le |\bx_i-\bx_j| < \epsilon_{k-1/2} } W(\bx_i-\bx_j)\cdot \frac{1}{-\ln \epsilon_{k-1/2}}\\
    \le & \sigma_k + C_2\frac{1}{k}\sigma_k\,,
\end{split}\end{equation}
where in the last inequality we used $-\ln \epsilon_{k-1/2}\ge k$ as the previous case.

{\bf Case $l=k+1,\dots,K$}: apply ({\bf W1}) to get
\begin{equation}\begin{split}
    \sigma_{l,\epsilon_k} - \sigma_l \le &  \frac{1}{2N(N-1)}\sum_{\epsilon_{l+1/2} \le |\bx_i-\bx_j| < \epsilon_{l-1/2} } (-a_0\ln \epsilon_k + C_1 - W(\bx_i-\bx_j)) \\
    \le &  \frac{1}{2N(N-1)}\sum_{\epsilon_{l+1/2} \le |\bx_i-\bx_j| < \epsilon_{l-1/2} } (-a_0\ln \epsilon_k + C_1 + a_0\ln\epsilon_{l-1/2} + C_1^*) \\
    \le &  \frac{1}{2N(N-1)}\sum_{\epsilon_{l+1/2} \le |\bx_i-\bx_j| < \epsilon_{l-1/2} } ((C_1+C_1^*) - a_0(l-k-\frac{1}{2})\ln A) \,, \\
\end{split}\end{equation}
where we used that $\big|W(\bx) +a_0\ln|\bx|\big| \le C_1^*$ for small $|\bx|$ and some constant $C_1^*$. Notice that $l-k-\frac{1}{2}\ge \frac{1}{2}$. Therefore, using
$
A=N > e^{4(C_1+C_1^*)/a_0}
$ 
for sufficiently large $N$, the quantity $((C_1+C_1^*) - a_0(l-k-\frac{1}{2})\ln A)$ in the above inequality is less than $- \frac{1}{2}a_0(l-k-\frac{1}{2})\ln A<0$, and we may proceed as
\begin{equation}\begin{split}
    \sigma_{l,\epsilon_k} - \sigma_l \le &   - \frac{1}{2}a_0(l-k-\frac{1}{2})\ln A\frac{1}{2N(N-1)}\sum_{\epsilon_{l+1/2} \le |\bx_i-\bx_j| < \epsilon_{l-1/2} } 1 \\
    \le &   - \frac{1}{4}a_0(l-k-\frac{1}{2})\ln A\frac{1}{2N(N-1)}\sum_{\epsilon_{l+1/2} \le |\bx_i-\bx_j| < \epsilon_{l-1/2} } \frac{W(\bx_i-\bx_j)}{- a_0\ln \epsilon_{l+1/2}} \\
    = & -\frac{(l-k-\frac{1}{2})\ln A}{4(-\ln \epsilon_0 + (l+\frac{1}{2})\ln A)}\sigma_l \\
    \le &  -\frac{\frac{1}{2}\ln A}{4(l+\frac{3}{2})\ln A}\sigma_l \\
    \le & -\frac{1}{16l}\sigma_l\,,
\end{split}\end{equation}
using $l-k-\frac{1}{2}\ge \frac{1}{2}$ and $l\ge 2$ respectively in the last two inequalities.

{\bf Case $l=\infty$}: apply ({\bf W1}) to get
\begin{equation}
    \sigma_{\infty,\epsilon_k} - \sigma_\infty \le 0\,,
\end{equation}
similar as the previous case.

We summarize these estimates and get
\begin{equation}\label{E2}\begin{split}
    E^{\epsilon_k}_N(\vec{\bx}) - E_N(\vec{\bx}) \le & \cC_EA^{-2(k-1/2)} + 2C_3\sum_{l=1}^{k-1}A^{-2(k-l-1/2)} \frac{1}{l}\sigma_l + C_2\frac{1}{k}\sigma_k -\frac{1}{16}\sum_{l=k+1}^K\frac{1}{l}\sigma_l \,,\\
\end{split}\end{equation}
for $k=1,\dots,K$.

Notice that $A=N$ is sufficiently large. Applying Lemma \ref{lem_ind} with $\alpha=\frac{1}{16}$ to the sequence $\{\sigma_k/k\}_{k=1}^K$, we see that either the RHS of \eqref{E2} is negative for some $k$, or $\sigma_k/k\le C\cC_E\beta^k,\,\forall k=1,\dots,K$ (with $\beta=\frac{C_2+1/32}{C_2+1/16}$). In the former case, we find an $\epsilon=\epsilon_k\le \epsilon_0$ with $E^{\epsilon_k}_N(\vec{\bx}) \le E_N(\vec{\bx})$. 

In the latter case, taking $\epsilon=\epsilon_K$, we get 
\begin{equation}
E^{\epsilon_K}_N(\vec{\bx}) - E_N(\vec{\bx}) \le C\cC_E\beta^K \le C\cC_E  \beta^{M \ln N} = C \cC_E N^{M \ln \beta}\,,
\end{equation}
using \eqref{Kestlog}. By taking $M=-\frac{1}{\ln \beta}$ we get
\begin{equation}
E^{\epsilon_K}_N(\vec{\bx}) - E_N(\vec{\bx}) \le  C  N^{-1} \,,
\end{equation}
which finishes the proof.
\end{proof}

\section{Acknowledgement}

The author would like to thank Jiuya Wang for proposing the questions considered in this paper and helpful conversations. The author would like to thank Sylvia Serfaty for communications on the existing results on mean field limits. The author would also like to thank Zhenfu Wang for helpful conversations.

\section{Appendix: proof of Lemma \ref{lem_min}}\label{app_min}

We first prove $\min E_N\le \min E_{N+1}$ which implies that $\min E_{N_1}\le \min E_{N_2}$ for any $N_1<N_2$. Let $\vec{\bx}= (\bx_1,\dots,\bx_i,\dots,\bx_{N+1})$ be a minimizer of $E_{N+1}$, and denote $\vec{\bx}_{\hat{i}} = (\bx_1,\dots,\hat{\bx}_i,\dots,\bx_{N+1})$ to be the $N$ particle configuration by deleting the $i$-th particle. Then
\begin{equation}
\min E_N \le E_N(\vec{\bx}_{\hat{i}}) = \frac{1}{2N(N-1)}\sum_{j,k=1,\,j\ne k,\,j\ne i,\,k\ne i}^{N+1} W(\bx_j-\bx_k)\,.
\end{equation}
Sum in $i=1,\dots,N+1$, we see that every pair $(j,k),\,j\ne k$ appears $N-1$ times on the RHS. Therefore
\begin{equation}\begin{split}
(N+1)\min E_N \le & \sum_{i=1}^{N+1} E_N(\vec{\bx}_{\hat{i}}) = \frac{1}{2N(N-1)}\cdot (N-1)\sum_{j,k=1,\,j\ne k}^{N+1} W(\bx_j-\bx_k) \\
= & (N+1)E_{N+1}(\vec{\bx}) = (N+1)\min E_{N+1}\,,
\end{split}\end{equation}
which gives $\min E_N\le \min E_{N+1}$.

Then we prove $\min E_N \le \min E$. Let $\rho$ be a minimizer of $E$. We integrate the equation
\begin{equation}
E_N(\vec{\bx}) = \frac{1}{2N(N-1)}\sum_{i,j=1,\, i\ne j}^N W(\bx_i-\bx_j)\,,
\end{equation}
with respect to the measure $\rho(\bx_1)\rd{\bx_1}\dots\rho(\bx_N)\rd{\bx_N}$ (which is a probability measure on $(\mathbb{T}^d)^N$), and get
\begin{equation}\begin{split}
& \int_{(\mathbb{T}^d)^N}E_N(\vec{\bx})\rho(\bx_1)\rd{\bx_1}\dots\rho(\bx_N)\rd{\bx_N} \\
= & \frac{1}{2N(N-1)}\sum_{i,j=1,\, i\ne j}^N \int_{(\mathbb{T}^d)^N}W(\bx_i-\bx_j)\rho(\bx_1)\rd{\bx_1}\dots\rho(\bx_N)\rd{\bx_N} \\
= & \frac{1}{2N(N-1)}\sum_{i,j=1,\, i\ne j}^N \int_{(\mathbb{T}^d)^N}W(\bx_i-\bx_j)\rho(\bx_i)\rd{\bx_i}\rho(\bx_j)\rd{\bx_j} \\
= & \frac{1}{N(N-1)}\sum_{i,j=1,\, i\ne j}^N E[\rho] \\
= & \min E\,.
\end{split}\end{equation}
Since the integral of $E_N(\vec{\bx})$ against a probability measure is $\min E$, there exists a point $\vec{\bx}\in (\mathbb{T}^d)^N$ such that $E_N(\vec{\bx}) \le \min E$. Therefore $\min E_N \le \min E$.

Finally we prove $\lim_{N\rightarrow\infty}\min E_N = \min E$. First, since we have proved that $\{\min E_N\}$ is increasing in $N$ and bounded above by $\min E$, the limit $\lim_{N\rightarrow\infty}\min E_N=: L$ exists and $L\le \min E$. Now we define $E_N^m$ as the discrete interaction energy with potential $\min\{W(\bx),m\}$, and similarly define $E^m$. 

For each $N$, take $\vec{\bx}_N$ as a minimizer of $E_N$, and $\rho_N$ its empirical measure. Take a weakly convergent subsequence $\{\rho_{N_k}\}$ which converges to $\rho$. Then
\begin{equation}\begin{split}
E^m[\rho] \le & \liminf_{k\rightarrow\infty}E^m[\rho_{N_k}]  \\
= & \liminf_{k\rightarrow\infty}(\frac{N_k-1}{N_k}E_{N_k}^m(\vec{\bx}_{N_k})+\frac{1}{2N_k}m) \\
\le & \liminf_{k\rightarrow\infty}(\frac{N_k-1}{N_k}E_{N_k}(\vec{\bx}_{N_k})+\frac{1}{2N_k}m) \\
= & L\,,
\end{split}\end{equation} 
where the first inequality uses the lower-semicontinuity of the functional $E^m$. Then taking $m\rightarrow\infty$, by the monotone convergence theorem we get
\begin{equation}
\min E \le E[\rho] = \lim_{m\rightarrow\infty}E^m[\rho] \le L\,.
\end{equation}
Combined with the inequality $L\le \min E$, we see that $L=\min E$.

\bibliographystyle{alpha}
\bibliography{set1.bib}

\end{document}